\numberwithin{equation}{section}
\theoremstyle{plain}
\newtheorem{theorem}{Theorem}[section]
\newtheorem{lemma}[theorem]{Lemma}
\newtheorem{proposition}[theorem]{Proposition}
\theoremstyle{remark}
\newtheorem{remark}[theorem]{Remark}
\numberwithin{equation}{section}
\begin{document}
\begin{frontmatter}
\title{On large deviations and intersection of random interlacements}
\runtitle{On large deviations and intersection of random interlacements}

\begin{aug}
\author[A]{\fnms{Xinyi}~\snm{Li}\ead[label=e1]{xinyili@bicmr.pku.edu.cn}}
\and
\author[B]{\fnms{Zijie}~\snm{Zhuang}\ead[label=e2]{zijie123@wharton.upenn.edu}}

\address[A]{BICMR, Peking University\printead[presep={,\ }]{e1}}

\address[B]{University of Pennsylvania\printead[presep={,\ }]{e2}}
\end{aug}

\begin{abstract}
We investigate random interlacements on $\mathbbm{Z}^d$ with $d \geq 3$, and derive the large deviation rate for the probability that the capacity of the interlacement set in a macroscopic box is much smaller than that of the box. As an application, we obtain the large deviation rate for the probability that two independent interlacements have empty intersections in a macroscopic box. We also prove that conditioning on this event, one of them will be sparse in the box in terms of capacity. This result is an example of the entropic repulsion phenomenon for random interlacements.
\end{abstract}

\begin{keyword}[class=MSC2020]
	\kwd{60F10}
	\kwd{60K35}
	\kwd{60J55}
	\kwd{82B41}
\end{keyword}

\begin{keyword}
\kwd{random interlacements}
\kwd{large deviations}
\kwd{entropic repulsion}
\end{keyword}

\end{frontmatter}
\section{Introduction}
\label{sec:intro}
The model of random interlacements is introduced in \cite{S10} to understand the trace of simple random walks as well as percolation with long-range correlations. Apart from its percolative properties (see in particular \cite{S10, SS09}), the large deviation properties of the model along with the trace of simple random walk, is also a central object of study; see \cite{S17, LS14, L17, S19a, S19b, NS20, CN20b, S21a, S21b} for recent progresses in this direction.

In this article, we consider the intersection of two independent interlacements. Percolative properties of the intersection set (and its complement) are considered by the second author in \cite{Z20}. In order to better characterize the intersection set, we calculate the asymptotic probability it leaves a macroscopic hole in the space. In doing so we also show that the optimal strategy for two interlacements to avoid each other is to force the one with smaller intensity to be almost empty in the box in terms of capacity, which falls within the scope of the entropic repulsion phenomenon for interlacements. (This phenomenon in the setup of Gaussian free fields is first studied in \cite{BDZ95, DG99} and then on interlacements recently in \cite{CN20b}.) In the course of the proof, an important step is to compute the large deviation rate for the probability that the capacity of the interlacement set in a macroscopic box is much smaller than that of the box. This rate function turns out to be given by a constraint problem on capacity, which is of independent interest. 

We now describe our results in more detail. Consider $\mathbbm{Z}^d$ and $\mathbbm{R}^d$ with $d \geq 3$, and two independent interlacements $\mathcal{I}_1^{u_1}, \mathcal{I}_2^{u_2}$ on $\mathbbm{Z}^d$ with intensity parameters $u_1, u_2$ respectively. We will omit the subscripts and write $\mathcal{I}^u$ when only one of them is considered. We write $\mathbbm{P}$ for the probability measure governing these objects, and $\mathbbm{E}$ for the corresponding expectation. Let $B(x,r)$ (resp.\ $\widetilde{B}(x,r)$) denote the closed $l^{\infty}$-norm box in $\mathbbm{Z}^d$ (resp.\ $\mathbbm{R}^d$) centered at $x$ and of radius $r$. We denote by ${\rm cap}(A)$ for $A \subset \subset \mathbbm{Z}^d$ the discrete capacity, and $\widetilde{{\rm cap}}(A)$ for $A \subset \mathbbm{R}^d$ the Brownian capacity. We call a set $A \subset \mathbbm{R}^d$ nice if it is the union of a finite number of boxes. For $\lambda \geq 0$, let $f(\lambda)$ be the solution of the following constraint problem:
\begin{equation}
\label{eq:def-f}
f(\lambda)=\inf_{A \;{\rm nice, }\;\widetilde{{\rm cap}} (A) \leq \lambda} \widetilde{{\rm cap}} (\widetilde{B}(0,1) \backslash A ).
\end{equation}
See Proposition~\ref{prop:property-f} for basic properties of $f$, 

Our first result is on the large deviation rate for the probability that the interlacement set in a macroscopic box has small capacity.
\begin{theorem}
\label{thm:1-1}
For any $u>0$ and $ 0<\lambda<\frac{1}{d}\widetilde{{\rm cap}} (\widetilde{B}(0,1) ) $,
$$
\lim_{N \rightarrow \infty} \frac{1}{N^{d-2}} \log \mathbbm{P}\left[ {\rm cap} \left(B(0,N) \cap \mathcal{I}^u\right) < \lambda N^{d-2}\right] =- \frac{u}{d}f(d\lambda)\,.
$$
\end{theorem}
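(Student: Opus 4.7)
My plan combines a direct construction for the lower bound with a mesoscopic coarse-graining for the upper bound, leveraging the continuity of $f$ from Proposition~\ref{prop:property-f} to pass between approximating quantities.

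\textbf{Lower bound.} The plan is to force $\mathcal{I}^u$ to avoid a macroscopic hole chosen nearly optimally in \eqref{eq:def-f}. Given $\eta>0$, by continuity of $f$, I would pick a nice $A\subset\widetilde{B}(0,1)$ with $\widetilde{{\rm cap}}(A)<d\lambda$ and $\widetilde{{\rm cap}}(\widetilde{B}(0,1)\backslash A)<f(d\lambda)+\eta$, and set $H_N=B(0,N)\backslash\{x\in\mathbbm{Z}^d:x/N\in A\}$. By the characterizing property of random interlacements, $\mathbbm{P}[\mathcal{I}^u\cap H_N=\emptyset]=\exp(-u\,{\rm cap}(H_N))$, and ${\rm cap}(H_N)/N^{d-2}\to \widetilde{{\rm cap}}(\widetilde{B}(0,1)\backslash A)/d$ by the standard discrete-to-Brownian capacity convergence on unions of lattice boxes. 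On this avoidance event, $\mathcal{I}^u\cap B(0,N)\subset\{x:x/N\in A\}$, whose discrete capacity scales as $\widetilde{{\rm cap}}(A)\,N^{d-2}/d<\lambda N^{d-2}$ for large $N$, so the event is contained in that of the theorem. Sending $N\to\infty$ and then $\eta\to 0$ yields $\liminf\geq -\tfrac{u}{d}f(d\lambda)$.

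\textbf{Upper bound.} I would proceed by coarse-graining at the mesoscopic scale $\epsilon N$. Fix small $\epsilon,\kappa\in(0,1)$, tile $\mathbbm{R}^d$ by cubes $\widetilde{Q}_\alpha$ of side $\epsilon$ with concentric inner cubes $\widetilde{Q}_\alpha^-$ of side $(1-\kappa)\epsilon$, and set $Q_\alpha=N\widetilde{Q}_\alpha\cap\mathbbm{Z}^d$, $Q_\alpha^-=N\widetilde{Q}_\alpha^-\cap\mathbbm{Z}^d$, restricting to $\alpha$ in a finite index set $I_\epsilon$ whose union covers $B(0,N)$ up to a boundary layer of capacity $o_\epsilon(N^{d-2})$. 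Record the vacant indices $V=\{\alpha\in I_\epsilon:\mathcal{I}^u\cap Q_\alpha^-=\emptyset\}$ and the mesoscopic occupied set $U_V=\bigcup_{\alpha\notin V}Q_\alpha$. For each deterministic $V_0\subset I_\epsilon$, $\mathbbm{P}[V\supset V_0]=\exp(-u\,{\rm cap}(\bigcup_{\alpha\in V_0}Q_\alpha^-))$, and the number of possible $V$ is $2^{|I_\epsilon|}$, constant in $N$, so a union bound over $V$ is harmless at our scale.

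The \textbf{key step}, and the main technical obstacle, is to show that on the event of the theorem, off a super-exponentially (in $N^{d-2}$) negligible event, one has
\begin{equation*}
{\rm cap}(\mathcal{I}^u\cap B(0,N))\geq (1-o_{\epsilon,\kappa,\delta}(1))\,{\rm cap}(U_V),
\end{equation*}
whence ${\rm cap}(U_V)\leq \lambda N^{d-2}(1+o(1))$. I plan to combine (i) monotonicity and near-additivity of discrete capacity over the $\kappa\epsilon N$-separated sets $\mathcal{I}^u\cap Q_\alpha^-$, with (ii) a box-level concentration ${\rm cap}(\mathcal{I}^u\cap Q_\alpha^-)\geq(1-\delta){\rm cap}(Q_\alpha^-)$ holding with super-exponential error $\exp(-c_\delta(\epsilon N)^{d-2})$, where $c_\delta$ must be chosen large enough that $c_\delta\epsilon^{d-2}\gg u f(d\lambda)/d$ so that a union bound over $\alpha\in I_\epsilon$ preserves a super-exponential rate in $N^{d-2}$. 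Such a concentration for capacity of $\mathcal{I}^u$ in a box is the crux; I expect it to be accessible by adapting techniques from \cite{S17, L17, NS20, CN20b}. Given the inequality, the union bound over $V$ together with the scaling ${\rm cap}(N\cdot)/N^{d-2}\to\widetilde{{\rm cap}}(\cdot)/d$ on nice sets converts the constraint into $\widetilde{{\rm cap}}(U_V/N)\leq d\lambda(1+o(1))$, while the exponent in each summand is bounded above by $-(u/d)\widetilde{{\rm cap}}(\widetilde{B}(0,1)\backslash U_V/N)(1-o_\kappa(1))\leq -(u/d)f(d\lambda(1+o(1)))(1-o_\kappa(1))$. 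Sending $N\to\infty$ then $\delta,\kappa,\epsilon\to 0$, using continuity of $f$, matches the lower bound.
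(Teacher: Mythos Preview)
Your lower bound is essentially the paper's argument and is fine.

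Your upper bound, however, has a genuine gap at the ``key step'' (ii). The concentration estimate you hope for --- that a mesoscopic box $Q_\alpha^-$ which is touched by $\mathcal{I}^u$ satisfies ${\rm cap}(\mathcal{I}^u\cap Q_\alpha^-)\geq(1-\delta){\rm cap}(Q_\alpha^-)$ except on an event of probability $\exp(-c_\delta(\epsilon N)^{d-2})$ with $c_\delta\epsilon^{d-2}>\tfrac{u}{d}f(d\lambda)$ --- is simply not available. Indeed, the very lower bound you just proved (applied to the box $Q_\alpha^-$ of radius $\sim(1-\kappa)\epsilon N/2$) shows that
\[
\mathbbm{P}\big[{\rm cap}(\mathcal{I}^u\cap Q_\alpha^-)<(1-\delta){\rm cap}(Q_\alpha^-)\big]
\;\geq\;\exp\Big(-\tfrac{u}{d}\,f\big((1-\delta)\widetilde{\rm cap}(\widetilde B(0,1))\big)\,\big(\tfrac{(1-\kappa)\epsilon}{2}\big)^{d-2}N^{d-2}(1+o(1))\Big),
\]
so necessarily $c_\delta\leq \tfrac{u}{d}f\big((1-\delta)\widetilde{\rm cap}(\widetilde B(0,1))\big)\cdot 2^{-(d-2)}\leq \tfrac{u}{d}\widetilde{\rm cap}(\widetilde B(0,1))\cdot 2^{-(d-2)}$, a fixed constant. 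Hence $c_\delta\epsilon^{d-2}\to 0$ as $\epsilon\to 0$, and for small $\lambda$ (where $f(d\lambda)$ is close to $\widetilde{\rm cap}(\widetilde B(0,1))$) the inequality $c_\delta\epsilon^{d-2}>\tfrac{u}{d}f(d\lambda)$ fails for every $\epsilon\in(0,1)$ and every $\delta$. The dichotomy ``vacant vs.\ occupied'' at scale $\epsilon N$ is too crude: the scenario ``box touched but with small intersection capacity'' (e.g.\ a single trajectory passing through) is only exponentially rare with a rate constant that is too small to be discarded, and contributes to neither side of your dichotomy in a usable way.

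The paper circumvents this by two changes. First, it works at the much finer scale $L=\lfloor N^{2/d}(\log N)^{1/d}\rfloor$, chosen so that the number of boxes satisfies $(N/L)^d=o(N^{d-2})$ and the union bound over configurations is harmless. Second, and crucially, it replaces the vacant/occupied dichotomy by a three-way classification: a box is Type-I good if $\mathcal{I}^u$ is ``visible'' from its boundary (hence has nearly full capacity inside), Type-II good if the averaged local time $\langle L^u,\bar e_{B_x}\rangle$ is small, and bad otherwise. The bad event is shown to be super-exponentially unlikely via the soft local time coupling (Proposition~\ref{prop:es-ac}). On the good event, the Type-I boxes carry the capacity of $\mathcal{I}^u\cap B(0,N)$, forcing the Type-II collection $\mathcal{C}_2$ to have large capacity via $f$; the cost of having small local time throughout $\mathcal{C}_2$ is then read off from the explicit Laplace transform of $\langle L^u,e_{\mathcal{C}_2}\rangle$ (Lemma~\ref{lem:laplace-transf}), not from a vacancy probability. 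The point is that ``low local time'' is the correct relaxation of ``vacant'', and its cost matches the rate function exactly.
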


Here, $N^{d-2}$ appears twice for it is the order of the discrete capacity of $B(0,N)$ (it also appears in related problems for Gaussian free fields, e.g., \cite{BD93, S15, CN20a}). The dimension $d$ appears in the rate function because the Brownian capacity is approximately $d$ times the discrete capacity, see Lemmas~\ref{lem:disc-cont} and \ref{lem:disc-cont2} for a more precise statement.

Our next result gives the large deviation rate for the probability that two independent interlacements have no intersections in a macroscopic box, and shows that conditioned on this event the one with smaller intensity parameter will be negligible in terms of capacity.

\begin{theorem}
\label{thm:1-2}Consider two independent random interlacements $\mathcal{I}^{u_1}_1$ and $\mathcal{I}^{u_2}_2$.
\begin{itemize}
\item[(1).]{For any $u_1,u_2>0$,
\begin{equation*}
\lim_{N \rightarrow \infty}\frac{1}{N^{d-2}} \log \mathbbm{P}\left[\mathcal{I}^{u_1}_1 \cap \mathcal{I}^{u_2}_2 \cap B(0,N)= \emptyset \right] =-\frac{\min \{u_1 , u_2\}}{d} \widetilde{{\rm cap}} (\widetilde{B}(0,1) )\,.
\end{equation*}}
\item[(2).]{
For any $u_1>u_2>0$ and $\epsilon>0$,
\begin{equation*}
\lim_{N \rightarrow \infty}\mathbbm{P}\left[{\rm cap} \left(B(0,N) \cap \mathcal{I}^{u_2}_2\right) < \epsilon N^{d-2}\big{|}\mathcal{I}^{u_1}_1 \cap \mathcal{I}^{u_2}_2 \cap B(0,N)= \emptyset\right] =1\,.
\end{equation*}}
\end{itemize}
\end{theorem}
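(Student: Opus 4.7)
The plan is to derive both parts of Theorem~\ref{thm:1-2} from Theorem~\ref{thm:1-1} via the classical hitting identity $\mathbbm{P}[\mathcal{I}^u \cap K = \emptyset] = \exp(-u\,{\rm cap}(K))$ for finite $K \subset \mathbbm{Z}^d$. Conditioning on $\mathcal{I}^{u_2}_2$ and integrating out $\mathcal{I}^{u_1}_1$ yields the key identity
\begin{equation*}
\mathbbm{P}\!\left[\mathcal{I}^{u_1}_1 \cap \mathcal{I}^{u_2}_2 \cap B(0,N) = \emptyset\right] = \mathbbm{E}\!\left[\exp\!\bigl(-u_1\, {\rm cap}(\mathcal{I}^{u_2}_2 \cap B(0,N))\bigr)\right],
\end{equation*}
and by the symmetric formula (conditioning on $\mathcal{I}^{u_1}_1$ instead) I may assume $u_2 \leq u_1$ throughout. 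The lower bound in part (1) then follows by restricting to the event $\{\mathcal{I}^{u_2}_2 \cap B(0,N) = \emptyset\}$, whose probability is $\exp(-u_2 \,{\rm cap}(B(0,N)))$, and invoking $d\,{\rm cap}(B(0,N)) \sim \widetilde{{\rm cap}}(\widetilde{B}(0,1)) N^{d-2}$ to match the target rate.

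For the upper bound in part (1), set $C_N := {\rm cap}(\mathcal{I}^{u_2}_2 \cap B(0,N))/N^{d-2}$, which lies in a deterministic interval of order one, discretize its range into slabs of width $\delta > 0$, apply Theorem~\ref{thm:1-1} to each slab, and sum to bound $\mathbbm{E}[e^{-u_1 C_N N^{d-2}}]$ above by $\sum_k e^{-u_1 k\delta N^{d-2}}\,\mathbbm{P}[C_N < (k+1)\delta]$. Passing to $\limsup_{N\to\infty}$ via Theorem~\ref{thm:1-1}, then $\delta \to 0$ using continuity of $f$ (Proposition~\ref{prop:property-f}), gives
\begin{equation*}
\limsup_{N\to\infty} \frac{1}{N^{d-2}} \log \mathbbm{P}[\mathcal{I}^{u_1}_1 \cap \mathcal{I}^{u_2}_2 \cap B(0,N) = \emptyset] \leq -\frac{1}{d}\inf_{\mu \geq 0}\bigl[u_1 \mu + u_2 f(\mu)\bigr].
\end{equation*}
Subadditivity of Brownian capacity applied to $\widetilde{B}(0,1) = (\widetilde{B}(0,1) \cap A) \cup (\widetilde{B}(0,1) \setminus A)$ gives $f(\mu) \geq \widetilde{{\rm cap}}(\widetilde{B}(0,1)) - \mu$, so for $u_2 \leq u_1$ one has $u_1 \mu + u_2 f(\mu) \geq u_2(\mu + f(\mu)) \geq u_2 \widetilde{{\rm cap}}(\widetilde{B}(0,1))$ with equality at $\mu = 0$, completing part (1).

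For part (2), repeating the discretization but keeping only slabs with $k\delta \geq \epsilon$ yields the same bound with the infimum restricted to $\mu \geq d\epsilon$. The sharper estimate $u_1 \mu + u_2 f(\mu) \geq u_2 \widetilde{{\rm cap}}(\widetilde{B}(0,1)) + (u_1 - u_2)\mu$ (again from subadditivity) shows this restricted infimum exceeds the unrestricted one by at least $(u_1 - u_2)d\epsilon > 0$, so combined with the denominator lower bound from part (1) the conditional probability decays like $\exp(-(u_1 - u_2)\epsilon N^{d-2} + o(N^{d-2})) \to 0$. The main obstacle throughout is the upper-bound step: although the reduction to the variational problem on $f$ is conceptually clean, it requires Theorem~\ref{thm:1-1} uniformly in $\lambda$ across a compact range together with enough regularity of $f$ (continuity and monotonicity from Proposition~\ref{prop:property-f}) to justify passing from the slab sum to the infimum; once this is done, the variational computation reduces entirely to the subadditivity inequality $f(\mu) + \mu \geq f(0)$.
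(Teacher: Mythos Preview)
Your proposal is correct and follows essentially the same route as the paper: both start from the identity
\[
\mathbbm{P}\bigl[\mathcal{I}^{u_1}_1 \cap \mathcal{I}^{u_2}_2 \cap B(0,N)=\emptyset\bigr]
=\mathbbm{E}\bigl[\exp\bigl(-u_1\,{\rm cap}(\mathcal{I}^{u_2}_2\cap B(0,N))\bigr)\bigr]
\]
(this is \eqref{eq:1-2quick}) and then transfer Theorem~\ref{thm:1-1} into a Laplace-type asymptotic, arriving at the variational problem $\inf_{\mu\ge 0}\bigl[u_1\mu+u_2 f(\mu)\bigr]$, which is resolved exactly as you do via $f(\mu)\ge \widetilde{\rm cap}(\widetilde B(0,1))-\mu$ (Proposition~\ref{prop:property-f}(3)). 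The only difference is packaging: the paper simply cites Varadhan's lemma, whereas you carry out the slab discretization by hand. Your version is arguably cleaner here, since Theorem~\ref{thm:1-1} is a distribution-function limit rather than a full LDP, so a black-box appeal to Varadhan requires a small extra check that you bypass; conversely, the paper's one-line invocation is shorter. Your treatment of part~(2) is also the natural refinement of the same computation and is correct. One minor remark: the ``uniformity'' you flag as an obstacle is not actually needed, since $C_N$ is deterministically bounded by ${\rm cap}(B(0,N))/N^{d-2}$, so there are only finitely many slabs and Theorem~\ref{thm:1-1} is applied at finitely many fixed values of $\lambda$.
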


In Theorem~\ref{thm:1-1} and Theorem~\ref{thm:1-2}, we only write down the case of $B(0,N)$, which is the blow-up of $\widetilde{B}(0,1)$, but analogous results should also hold for more general sets (with some regularity assumptions). We can also show that the one with smaller intensity parameter is negligible in the box in terms of local time. When $u_1 = u_2$, it is possible to prove that with asymptotically equal probability one of these interlacements sets is negligible in terms of local time. However, this proof requires another approach, which we will not include in this paper for brevity. See Remark~\ref{remark:local-time} for more discussions.

Theorem~\ref{thm:1-2} is an example of the entropic repulsion phenomenon for random interlacements. This phenomenon suggests that conditional on some rare results (such as disconnecting a box from far away \cite{S17, NS20, CN20b}, exceeding expected values in a box \cite{S19b, S21a, S21b}, or having macroscopic holes \cite{S19a}), random interlacements should stick to the configuration with the lowest energy; namely tilted interlacements with some non-homogeneous density. (Tilted interlacements are first introduced in \cite{LS14} and plays a major role in both the aforementioned work and \cite{L17} for the asymptotic lower bounds for disconnection probability.) This phenomenon is also conjectured to exist for simple random walks which can be seen as interlacements with intensity zero. However, only a few rigorous results have been proved for random interlacements or simple random walks; see \cite{CN20b, S19a}, and there are still many open problems. Similar phenomenon for Gaussian free fields is better understood since it possesses a nice domain Markov property which is absent in interlacements. Another important issue is that upper and lower bounds involving percolative properties of interlacements are usually given through differently defined critical thresholds, while the sharpness of phase transition is still open for interlacements (however, this is no longer a problem for Gaussian free fields as sharpness in this case has been verified in \cite{DCGRS20}).

We briefly describe the proof strategy of Theorem~\ref{thm:1-1} and Theorem~\ref{thm:1-2}. 

The lower bound of Theorem~\ref{thm:1-1} follows directly from the definition of $f$. The interlacements can be forced to stay within the blow-up of the set which solves the constraint problem, and then the lower bound of the rate function is given by the Brownian capacity of this set.

We now turn to the upper bound. Heuristically speaking, similar to the decomposition of Gaussian free fields, we can also decompose interlacements into a local part and a global part. Since the local part is approximately independent, it is more difficult to be tilted than the global part which has long-range correlations. Therefore, when considering the probability of some rare events, we can assume that the local part remains unchanged and only the global part is tilted. In this case, the rare event corresponds to some tilting of the global part. Since the global part has an integrable structure, the cost of this tilting can be calculated and leads to an upper bound on the large deviation rate function.

To make this intuition rigorous, we apply the coarse-graining procedure introduced in \cite{S15, S17}. More specifically, we partition the macroscopic box into mesoscopic boxes whose side length is chosen appropriately. By the soft local time technique in \cite{PT15, CGPV13}, we can show that with super-exponential probability, interlacements will behave regularly in most boxes, e.g., the fraction of occupied points is consistent with the  local time profile (not necessarily the intensity parameter). This corresponds to the intuition that the local part is much harder to be tilted. In our case, we call the interlacement set in a mesoscopic box {\it good} if it either has a small average local time or is ``visible'' for simple random walks, i.e., has capacity comparable to that of the box, see \eqref{eq:condition1} and \eqref{eq:condition2}. Under the constraint that most boxes are good, we can replace the event that the interlacement set in the macroscopic box has small capacity by a collection of events in each of which the interlacement set has a small average local time in many mesoscopic boxes and the total capacity of these boxes is above a certain value. The probability of each of these events can be bounded above through the Laplace transform of local times (see Proposition~\ref{lem:laplace-transf}). Adding all these inequalities together, we can obtain an upper bound for the large deviation probability which matches the lower bound in the principle order.

Theorem~\ref{thm:1-2} is a direct corollary of Theorem~\ref{thm:1-1} by observing that
\begin{equation}\label{eq:1-2quick}
\mathbbm{P} \Big[ \mathcal{I}^{u_1} \cap \mathcal{I}^{u_2} \cap B(0,N) = \emptyset \Big] =\mathbbm{E} \left[ e^{-u_1 \cdot {\rm cap}\left( \mathcal{I}^{u_2}  \cap B(0,N)\right)} \right] \,.
\end{equation}

In what follows, we list some of these open problems (and difficulties therein) concerning the entropic repulsion phenomenon for interlacements and related models.

The fundamental problem is to properly describe ``the convergence of some random set towards tilted interlacements''. In this paper, we prove convergence in terms of capacity and discussed the convergence in terms of local times. It would be great if they could be improved to convergence in terms of local (or mesoscopic) distributions, although both are still far from enough to fully characterize tilted interlacements.

The next problem is to tilt interlacements downwards. As mentioned in Remark 5.5 of \cite{S19b}, when studying large deviation problems in which the optimal strategy involves tilting interlacements downwards, one will encounter difficulties in estimating the rate function, especially in the case where the optimal strategy cannot be solved explicitly. Fortunately, in our case, this problem does not arise, as the optimal strategy of the problem we are considering is very simple.

We now discuss the independence under conditioning. For two independent interlacements, under the conditioning we discuss in this work they should behave like two tilted interlacements and more importantly remain asymptotically independent (analogous results should also hold for multiple independent interlacements). In this paper, we partially prove an example of this argument, but can one say more about it? To get a satisfactory result, it seems necessary to answer the first problem (since it is also not clear how to say several large random sets are asymptotically independent). 

Finally, a word on the entropic repulsion for the trace of simple random walks. To our knowledge, few rigorous results have been obtained for simple random walks in this direction. Even the rate function is unknown in some cases. One classical question on the large deviations of simple random walks (which is also relevant to polymer models) is what happens when the range of simple random walks is much smaller than expected. The large deviation rate function is obtained in \cite{BBH01, P12}, and the authors of \cite{BBH01} call the optimal strategy the ``Swiss cheese'', meaning that the range covered by the random walk looks like Swiss cheese (or more precisely Emmentaler cheese), viz.\ covering a positive fraction (but not all) of points in the space. It is conjectured that the ``Swiss cheese'' strategy actually forces the simple random walk to behave like tilted interlacements around some point; see \cite{AS17, AS20a} for some recent progresses.

This work is organized as follows. In Section \ref{sec:2}, we introduce our notation and setup, and recall a few useful results. Section \ref{sec:coarse} is dedicated to the coarse-graining strategy, which is a necessary step in obtaining the upper bound in Theorem \ref{thm:1-1}. We discuss the constraint problem and Brownian capacities in Section \ref{sec:4}. Finally, we wrap up the proof of both theorems and briefly remark an alternative approach in Section \ref{sec:5}.

Finally, let us explain our convention concerning constants. Constants like $ c, c', C, C'$ may change from place to place, while constants with subscripts like $c_1,C_1$ are kept fixed throughout the article. All constants may depend on $d$ implicitly. The dependence on additional variables will be marked at the first occurrence of each constant.

\section{Notation and some useful results}\label{sec:2}
In this section, we review definitions of simple random walks, Brownian motions and random interlacements, and collect some useful results about capacities and local times.

We begin with some notation. We consider $\mathbbm{Z}^d$ and $\mathbbm{R}^d$ with $d \geq 3$. For a real value $a$, let $\lfloor  a\rfloor$ denote the largest integer not greater than $a$. Let $| \cdot |_1$ (resp. $| \cdot |_\infty$) denote the $l^1$-norms (resp. $l^{\infty}$-norms) in both $\mathbb{Z}^d$ and $\mathbbm{R}^d$. In what follows, we will add a tilde ``$\;\widetilde{\;}\;$'' above objects in the continuum to distinguish them from their discrete counterparts.  We write $B(x,r)=\{ y \in \mathbbm{Z}^d:|x-y|_\infty \leq r \}$ for the closed $l^\infty$-ball in $\mathbbm{Z}^d$ centered at $x$ and of radius $r$. Given $A \subset \mathbbm{Z}^d$, we write $\partial_i A$ for its inner boundary and $\partial A$ for its outer boundary. Let $\widetilde{B}(x,r) = \{ y \in \mathbbm{R}^d: |x-y|_\infty \leq r \}$ denote the closed $l^{\infty}$-ball in $\mathbbm{R}^d$ centered at $x$ and of radius $r$. Let $\widetilde{A}(x,r,R) = \{ y \in \mathbbm{R}^d:r < |x-y|_{\infty} \leq R  \}$ denote the $l^{\infty}$-annulus in $\mathbbm{R}^d$ centered at $x$, of inner radius $r$ and outer radius $R$. Given $A \subset \mathbbm{R}^d$, we write $\partial A$ for its boundary. Here we slightly abuse the notation $|\cdot|_1$, $| \cdot |_{\infty}$ and $\partial$ in $\mathbbm{Z}^d$ and $\mathbbm{R}^d$ (and hopefully they will be clear in the context). For $A \subset \mathbbm{Z}^d$, let $\widetilde A$ denote its $\mathbbm{R}^d$-filling (which contains all the $l^\infty$-balls centered at the vertices in $A$ with radius $1/2$ in $\mathbbm{R}^d$). For a set $B$ in $\mathbbm{R}^d$ and any integer $N \geq 1$, let $B_N$ stand for the blow-up of $B$ in the discrete: $B_N = \{ x \in \mathbbm{Z}^d ;\; \inf_{y\in NB} |x-y|_\infty<1 \}$. 

We continue with continuous-time simple random walks in $\mathbbm{Z}^d$. Let $P_x$ denote the law of a continuous-time simple random walk $\{ X_t \}_{t \geq 0}$ on $\mathbbm{Z}^d$ with jump rate $1$ started at a vertex $x$. Given $A \subset\subset\mathbbm{Z}^d$, let $H_A$ denote the first time that $X_t$ hits $A$ and $T_A$ denote the first time that $X_t$ leaves $A$. We write $e_A$ for the equilibrium measure of $A$, $\bar{e}_A$ for the normalized equilibrium measure and $\mbox{cap} (A)=\sum_x e_A(x)$ for the discrete capacity of $A$. We write $g(x,y)$ for the Green function with respect to simple random walks. 

We now review some notation in $\mathbbm{R}^d$ about standard Brownian motions. In this paper, all sets we consider in $\mathbbm{R}^d$ are open or closed sets. We use $|\cdot|$ to denote the volume ($d$-dimensional Lebesgue measure) of these sets. Let $W_x$ denote the law of a standard Brownian motion $\{ W_t\}_{t \geq 0}$ in $\mathbbm{R}^d$ started at a point $x$. Given $A \subset \mathbbm{R}^d$, let $\widetilde{H}_A$ denote the entrance time of $A$ and $\widetilde{T}_A$ denote the exit time from $A$. We write $\widetilde{e}_A$ for the equilibrium measure of $A$ and $\widetilde{{\rm cap}}(A)$ for the Brownian capacity of $A$. We write $\widetilde{g}(x,y)$ for the Green function with respect to Brownian motions. We call a set \textbf{regular} if its closure and interior have the same Brownian capacity. We call a set $A \subset \mathbbm{R}^d$ \textbf{nice} if it is the union of a finite number of boxes.

From now on, let $N>100^d$ denote the side length of the macroscopic box and $L=L(N)$ denote the size of the mesoscopic boxes, viz.\ $L$-boxes defined below. When a large $N$ is given, we choose $L$ as
\begin{equation}
\label{eq:def-L}
L = \left\lfloor N^{2/d}(\log N)^{1/d}\right\rfloor\,.
\end{equation}
Let $\delta>0$ be a constant governing all the errors which tends to zero in the end and pick a large integer $K=K(\delta)>100$  
(the choice of $K$ is given in \eqref{eq:par-choice}).

We call an $l^\infty$-ball $B(x,r)$ in $\mathbbm{Z}^d$ an \textbf{$L$-box} if $r=L$, $x \in (2K+1)L \mathbbm{Z}^d$ and $B(x,r) \subset B(0,N)$. For $x \in (2K+1)L \mathbbm{Z}^d$, we further write $B_x$ short for $B(x,L)$ and $D_x$ short for $B(x,KL)$. Then, $D_x$ are disjoint from each other. We will write $\mathcal{C}$ for the union of a collection of $L$-boxes and use ${\rm Card}(\mathcal{C})$ to denote the number of $L$-boxes it contains (note that this is different from the common usage of ${\rm Card}(\cdot)$). We sometimes also view it as a set in $\mathbbm{R}^d$.

The following lemma is part of Proposition 2.5 in \cite{S17}. It states that when $K$ is large (equivalently $L$-boxes are far apart from each other), the relative equilibrium measure defined on $\mathcal{C}$ is close to the equilibrium measure defined on each $L$-box. We omit the proof.

\begin{lemma}
\label{lem:compare-eq}
If $L \geq 1$ and $K \geq c_1(\delta)$, then for any $\mathcal{C}$ which is the union of a collection of $L$-boxes, any $L$-box $B$ contained in $\mathcal{C}$ and any $x \in B$
$$
(1-\delta)\bar{e}_B(x) \leq \frac{e_{\mathcal{C}}(x)}{e_\mathcal{C}(B)} \leq (1+\delta)\bar{e}_B(x) \,,
$$
where $e_\mathcal{C}(B)=\sum_{y \in B}e_\mathcal{C}(y)$. 
\end{lemma}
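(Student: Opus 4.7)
My plan is to interpret $e_\mathcal{C}(x)/e_\mathcal{C}(B)$ as a conditional harmonic-measure-from-infinity and to reduce it to $\bar e_B(x)$ by exploiting the fact that the $L$-boxes in $\mathcal{C}$ are pairwise separated on scale $KL$.

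The starting point is the characterization of the normalized equilibrium measure as harmonic measure from infinity, $\bar e_A(y) = \lim_{z \to \infty} P_z[X_{H_A} = y \mid H_A < \infty]$. Applying this with $A = \mathcal{C}$ and forming the appropriate ratio gives
$$
\frac{e_\mathcal{C}(x)}{e_\mathcal{C}(B)} \;=\; \frac{\bar e_\mathcal{C}(x)}{\bar e_\mathcal{C}(B)} \;=\; \lim_{z \to \infty} P_z\bigl[X_{H_\mathcal{C}} = x \,\big|\, X_{H_\mathcal{C}} \in B\bigr],
$$
so the task is to show that the right-hand side is within $(1 \pm \delta)\bar e_B(x)$, uniformly in sufficiently distant $z$.

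The key structural input is that the boxes $D_y = B(y, KL)$ are pairwise disjoint, which forces $\mathcal{C} \cap D_{x_B} = B$ where $x_B$ is the center of $B$. Decomposing the trajectory from $z$ by successive entrances into $D_{x_B}$ via the strong Markov property, and noting that only the final excursion (the one that reaches $B$ before leaving $D_{x_B}$) contributes to $\{X_{H_\mathcal{C}} \in B\}$, one writes the conditional entrance law as a mixture
$$
P_z[X_{H_\mathcal{C}} = x \mid X_{H_\mathcal{C}} \in B] \;=\; \sum_{z' \in \partial_i D_{x_B}} \mu_z(z')\, P_{z'}[X_{H_B} = x \mid H_B < T_{D_{x_B}}],
$$
where $\mu_z$ is some probability measure on $\partial_i D_{x_B}$ depending on $z$ and $\mathcal{C}$. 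Crucially, the inner factor depends only on $z'$, not on $z$ or on $\mathcal{C} \setminus B$.

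The main and genuine obstacle is then the uniform-in-$z'$ approximation
$$
P_{z'}[X_{H_B} = x \mid H_B < T_{D_{x_B}}] \;=\; \bigl(1 + O(K^{-(d-2)})\bigr)\, \bar e_B(x),\qquad z' \in \partial_i D_{x_B},\ x \in B.
$$
This is a standard ``harmonic-measure-from-far-away'' estimate: since $|z' - x_B|_\infty \sim KL$ while $B$ has radius $L$, a Green-function / last-exit identity expresses the numerator as proportional to $e_B(x)\, g_{D_{x_B}}(z', x)$, and applying the Harnack principle to $y \mapsto g_{D_{x_B}}(z', y)$ on the annular region $B(x_B, KL) \setminus B(x_B, 2L)$ shows it is essentially constant for $y \in B$ with relative error $O(K^{-(d-2)})$. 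Choosing $c_1(\delta)$ of order $\delta^{-1/(d-2)}$ absorbs this into the factor $1 \pm \delta$; inserting back into the mixture formula and letting $z \to \infty$ then yields the lemma.
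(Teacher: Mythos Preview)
The paper does not supply its own proof of this lemma; it simply records that the statement is part of Proposition~2.5 in Sznitman's disconnection paper and writes ``We omit the proof.'' So there is nothing in the paper to compare against beyond that pointer.

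Your sketch is a faithful reconstruction of the standard argument behind that proposition: interpret $e_{\mathcal C}(x)/e_{\mathcal C}(B)$ as the conditional harmonic measure from infinity, peel off the last entrance into $D_{x_B}$ via the strong Markov property (your mixture formula is correct once one writes it through the sequence of stopping times given by successive entrances into $D_{x_B}$), and then invoke a harmonic-measure-from-far-away estimate for $P_{z'}[X_{H_B}=\cdot\mid H_B<T_{D_{x_B}}]$. One minor correction: the Harnack step on $y\mapsto g_{D_{x_B}}(z',y)$ over a ball of radius $\sim KL$ yields relative oscillation $O(1/K)$ on $B$, not $O(K^{-(d-2)})$; the $K^{-(d-2)}$ factor controls the discrepancy between $e_B$ and the relative equilibrium measure $e_B^{D_{x_B}}$, but the dominant error from the Green-function ratio is of order $1/K$. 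Consequently $c_1(\delta)$ should be taken of order $\delta^{-1}$ rather than $\delta^{-1/(d-2)}$. This does not affect the validity of the lemma.
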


The next lemma is collected from Proposition A.1 in \cite{NS20}. It states that the discrete capacity of $\mathcal{C}$ is close to $d$ fraction of the Brownian capacity of $\widetilde{\mathcal{C}}$ when $L$ (equivalently $N$) and $K$ are both large. We also omit the proof.

\begin{lemma}
\label{lem:disc-cont}
If $L \geq c_2(\delta)$ and $K \geq c_3(\delta)$, then for any $\mathcal{C}$ which is the union of a collection of $L$-boxes and its $\mathbbm{R}^d$-filling $\widetilde{\mathcal{C}}$
\begin{equation*}
(1-\delta) \widetilde{{\rm cap}}(\widetilde{\mathcal{C}}) \leq d\cdot{\rm cap}(\mathcal{C}) \leq (1+\delta)\widetilde{{\rm cap}}(\widetilde{\mathcal{C}})\,.
\end{equation*}
\end{lemma}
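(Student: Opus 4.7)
The plan is to prove the two-sided comparison via the variational characterization of capacity combined with the Green-function asymptotics $d\cdot g(x,y) = \widetilde g(x,y)(1+o(1))$ as $|x-y|\to\infty$. The factor of $d$ arises because the jump-rate-one continuous-time simple random walk has generator $\frac{1}{2d}\Delta$ (each coordinate having variance $1/d$ per unit time), so after time-change by $d$ it matches standard Brownian motion; equivalently, $g(x,y) = \frac{a_d}{|x-y|^{d-2}}+O(|x-y|^{1-d})$ while $\widetilde g(x,y)=\frac{d\,a_d}{|x-y|^{d-2}}$. Quantitatively, fix $\eta>0$ small in terms of $\delta$ and choose $L_0=L_0(\eta)$ so that $|d\,g(x,y)-\widetilde g(x,y)|\leq \eta\,\widetilde g(x,y)$ whenever $|x-y|\geq L_0$.

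For the upper bound $d\cdot{\rm cap}(\mathcal{C}) \leq (1+\delta)\widetilde{\rm cap}(\widetilde{\mathcal{C}})$, I would transport the discrete equilibrium measure to the continuum. Define $\widetilde\mu$ on $\widetilde{\mathcal{C}}$ by placing density $e_{\mathcal{C}}(z)/d$ with respect to Lebesgue measure on each unit cube $C_z:=z+[-\tfrac12,\tfrac12]^d$ for $z\in\mathcal{C}$, so that $\|\widetilde\mu\|={\rm cap}(\mathcal{C})/d$. For each $x\in\widetilde{\mathcal{C}}$, split the potential $\int \widetilde g(x,y)\widetilde\mu(dy)$ into a far part (cubes at distance $\geq L_0$) and a near part. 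The far part is bounded by $(1+\eta)\sum_z g(z_x,z)e_{\mathcal{C}}(z)\leq 1+\eta$ using the Green-function comparison together with the discrete equilibrium identity $\sum_y g(z_x,y)e_{\mathcal{C}}(y)=1$. The near part is controlled by noting that $e_{\mathcal{C}}(z)\leq (1+\delta)\bar e_B(z)\,e_{\mathcal{C}}(B) = O(L^{-(d-1)})$ uniformly in $z$ (via Lemma~\ref{lem:compare-eq} and the fact that an $L$-box has capacity $O(L^{d-2})$ and its equilibrium measure lives on the boundary), so the contribution is $O(L_0^2/L)$, which is $\leq \delta/2$ once $L$ is large. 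The variational characterization $\widetilde{\rm cap}(\widetilde{\mathcal{C}})\geq \|\widetilde\mu\|/\sup_x\int \widetilde g\,d\widetilde\mu$ then yields the claimed inequality.

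The lower bound is the mirror construction: define a discrete measure $\mu$ on $\mathcal{C}$ by $\mu(z)=\widetilde e_{\widetilde{\mathcal{C}}}(C_z)/d$ (mass from the continuous equilibrium measure, rescaled by $1/d$), so that $\|\mu\|=\widetilde{\rm cap}(\widetilde{\mathcal{C}})/d$. The analogous far/near split of $\sum_y g(x,y)\mu(y)$ bounds it by $1+\delta$, using the Brownian equilibrium identity on the far part and, on the near part, the fact that $\widetilde e_{\widetilde{\mathcal{C}}}$ sits on $\partial\widetilde{\mathcal{C}}$ and assigns $O(L^{-(d-1)})$ mass to any single unit cube (the continuum analog of Lemma~\ref{lem:compare-eq}, which follows from the same argument applied to $\widetilde g$). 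One concludes ${\rm cap}(\mathcal{C})\geq \widetilde{\rm cap}(\widetilde{\mathcal{C}})/(d(1+\delta))$, i.e., the desired lower bound after rearranging $\delta$.

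The main technical hurdle I expect is handling the potential contributions from cubes within $L_0$ of $x$, especially when $x$ lies near a corner or edge of $\widetilde{\mathcal{C}}$ where $\widetilde g$ has its singularity. The cleanest route is to observe that both equilibrium measures concentrate on the boundary faces of the $L$-boxes and are uniformly spread there (at level $O(L^{-(d-1)})$ per unit cell) by Lemma~\ref{lem:compare-eq}; then the near-diagonal integral $\int_{|y-x|\leq L_0}\widetilde g(x,y)\,dy = O(L_0^2)$ multiplied by this density gives a term that vanishes as $L\to\infty$ (with $L_0$ fixed), which is why the hypothesis $L\geq c_2(\delta)$ suffices.
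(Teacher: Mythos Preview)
The paper does not give its own proof here; it simply quotes Proposition~A.1 of \cite{NS20}. Your strategy---variational characterization of capacity, Green-function asymptotics for the far part, and a near-diagonal correction controlled through Lemma~\ref{lem:compare-eq}---is the standard one and is essentially what that reference does.

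That said, there is a genuine error in your execution: you have the factor $d$ on the wrong side of the Green-function comparison. The correct asymptotic is
\[
g(x,y)\;\sim\; d\,\widetilde g(x,y),\qquad\text{not}\quad d\,g(x,y)\sim\widetilde g(x,y).
\]
Your own heuristic shows why: the jump-rate-one walk has coordinate variance $1/d$ per unit time, so it is $d$ times \emph{slower} than standard Brownian motion, hence spends $d$ times longer near each site, giving the discrete walk the \emph{larger} Green function. Alternatively, the very statement you are proving says $\widetilde{\rm cap}\approx d\cdot{\rm cap}$; since capacity scales inversely with the Green kernel, this forces $\widetilde g\approx g/d$. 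With your stated relation, the construction (density $e_{\mathcal{C}}(z)/d$, total mass ${\rm cap}(\mathcal{C})/d$, potential $\le 1+\eta$) only yields $\widetilde{\rm cap}(\widetilde{\mathcal{C}})\ge {\rm cap}(\mathcal{C})/\bigl(d(1+\eta)\bigr)$, i.e.\ $d\cdot{\rm cap}(\mathcal{C})\le d^2(1+\eta)\widetilde{\rm cap}(\widetilde{\mathcal{C}})$, off by a factor $d^2$. The fix is immediate: take the transported density to be $d\cdot e_{\mathcal{C}}(z)$ on $C_z$ (respectively $\mu(z)=d\cdot\widetilde e_{\widetilde{\mathcal{C}}}(C_z)$ for the mirror direction); then the potential is still $\le 1+\eta$ and the masses are $d\cdot{\rm cap}(\mathcal{C})$ and $\widetilde{\rm cap}(\widetilde{\mathcal{C}})$, giving exactly the two inequalities.

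A smaller slip: $e_{\mathcal{C}}(z)\le(1+\delta)\bar e_B(z)\,e_{\mathcal{C}}(B)$ is of order $L^{-(d-1)}\cdot L^{d-2}=L^{-1}$, not $L^{-(d-1)}$. Your final near-part bound $O(L_0^2/L)$ is nonetheless correct (and sufficient), so this is only a typo in the intermediate step.
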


The next lemma is a classical fact that relates the Brownian capacity and the discrete capacity of its blow-up. It follows from estimates through variational characterizations of both types of capacity. See e.g., Lemma 2.2 in \cite{BD93} for the corresponding bounds. Although the original proof works for nice sets, it can also be extended to general regular sets by Proposition 1.13 in \cite{PS78}.
\begin{lemma}
\label{lem:disc-cont2}
Suppose $A$ is a regular set in $\mathbbm{R}^d$. Then,
$$
\lim_{N \rightarrow \infty} \frac{1}{N^{d-2}} {\rm cap}(A_N) = \frac{1}{d} \widetilde{{\rm cap}}(A)\,.
$$
\end{lemma}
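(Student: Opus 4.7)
The plan is to combine the Dirichlet-form variational characterizations of both capacities with a direct Riemann-sum comparison, and to pass from the nice-set case (which is Lemma~2.2 of \cite{BD93}) to general regular sets by monotone approximation.

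Recall the variational formulas ${\rm cap}(B) = \inf\{\mathcal{E}(u,u): u\equiv 1\text{ on }B\}$ and $\widetilde{{\rm cap}}(B) = \inf\{\tfrac12\int|\nabla u|^2: u\equiv 1\text{ on }B\}$, where the Dirichlet form of continuous-time SRW on $\mathbbm{Z}^d$ is $\mathcal{E}(u,u) = \frac{1}{4d}\sum_{x\sim y}(u(x)-u(y))^2$. Assume first $A$ is nice. For the upper bound on $\limsup N^{2-d}{\rm cap}(A_N)$, fix $\epsilon>0$ and pick a smooth $u:\mathbbm{R}^d\to[0,1]$ of compact support with $u\equiv 1$ on an open neighborhood of $A$ and $\tfrac12\int|\nabla u|^2\le\widetilde{{\rm cap}}(A)+\epsilon$. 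Set $u_N(x):=u(x/N)$; then $u_N\equiv 1$ on $A_N$ for all large $N$, hence ${\rm cap}(A_N)\le\mathcal{E}(u_N,u_N)$. Expanding $(u(x/N+e/N)-u(x/N))^2\approx N^{-2}(\partial_e u(x/N))^2$ and summing over the $2d$ unit vectors $\pm e_i$ gives a factor $2|\nabla u|^2$, so a Riemann sum yields $\mathcal{E}(u_N,u_N) = \frac{N^{d-2}}{d}\cdot\tfrac12\int|\nabla u|^2 + o(N^{d-2})$; dividing by $N^{d-2}$ and letting $\epsilon\to 0$ finishes the $\limsup$. The matching $\liminf$ comes from building a piecewise-multilinear extension $\widetilde h_N$ to $\mathbbm{R}^d$ of the discrete equilibrium potential $h_N$ of $A_N$ with $\widetilde h_N\equiv 1$ on $A$: the same Riemann-sum calculation in reverse gives $\tfrac12\int|\nabla\widetilde h_N|^2 \le \frac{d}{N^{d-2}}\mathcal{E}(h_N,h_N)+o(1) = \frac{d}{N^{d-2}}{\rm cap}(A_N)+o(1)$, so $\widetilde{{\rm cap}}(A)\le\frac{d}{N^{d-2}}{\rm cap}(A_N)+o(1)$.

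For a general regular $A$, invoke Proposition~1.13 of \cite{PS78} to choose nice sets $A_n^-\nearrow A^\circ$ and $A_n^+\searrow\overline A$ with $\widetilde{{\rm cap}}(A_n^\pm)\to\widetilde{{\rm cap}}(A)$ (the two limits equal $\widetilde{{\rm cap}}(A^\circ)$ and $\widetilde{{\rm cap}}(\overline A)$, which coincide with $\widetilde{{\rm cap}}(A)$ by regularity). For each fixed $n$, monotonicity of discrete capacity together with the inclusions $(A_n^-)_N\subset A_N\subset(A_n^+)_N$ (valid for all $N$ large, given $n$) and the nice-set result gives $\tfrac1d\widetilde{{\rm cap}}(A_n^-)\le\liminf_N N^{2-d}{\rm cap}(A_N)\le\limsup_N N^{2-d}{\rm cap}(A_N)\le\tfrac1d\widetilde{{\rm cap}}(A_n^+)$; sending $n\to\infty$ pinches to $\tfrac1d\widetilde{{\rm cap}}(A)$.

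The main obstacle is boundary bookkeeping: forcing $u_N\equiv 1$ on the \emph{entire} $A_N$ (not just on $NA\cap\mathbbm{Z}^d$) requires a slight enlargement of the level set of $u$, while forcing $\widetilde h_N\equiv 1$ on $A$ requires a compensating shrinkage; the cost of these boundary layers vanishes in the limit precisely because regularity equates the capacities of the interior and the closure, which is exactly the content of the Port--Stone approximation used in the final sandwich.
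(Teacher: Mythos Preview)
Your proposal is correct and follows exactly the route the paper indicates: the variational (Dirichlet-form) characterizations of both capacities together with a Riemann-sum comparison for nice sets (which is the content of Lemma~2.2 in \cite{BD93}), and then the Port--Stone approximation (Proposition~1.13 of \cite{PS78}) to pass to general regular sets. The paper does not spell out any of these details and merely points to these two references, so you have simply fleshed out what the authors left implicit.
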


In the rest of this section, we will briefly introduce the (continuous-time) random interlacements on $\mathbbm{Z}^d$. We refer readers to \cite{DRS14} for a detailed introduction of the model, and to e.g.\ \cite{LS14} for the construction of the continuous-time interlacements. Let $W$ denote the space of continuous-time doubly-infinite $l^1$-neighbor paths in $\mathbbm{Z}^d$, and let $W^*$ denote the quotient space of $W$ modulo time shift. We write $\pi$ for the quotient map from $W$ to $W^*$. We can define a Poisson point measure $\mu$ on $W^*$ characterized by the following property. Given $A \subset \subset \mathbb{Z}^d$, let $W^*_A$ denote the paths in $W^*$ that pass through $A$ and $\mu_A$ denote the restriction of $\mu$ on $W^*_A$. Then, we have
\begin{equation}
\label{eq:def-RI}
    \mu_A \overset{d}{=}\sum_{i=1}^{N_A}\delta_{\pi(X^i)}\,,
\end{equation}
where $N_A\sim{\rm Poisson}(u\cdot  {\rm cap}(A))$, and $\{X^i _t\}_{t \in \mathbbm{R}}$ are doubly-infinite paths on $\mathbbm{Z}^d$ in which $X^i_0$ is a random point in $A$ sampled according to the equilibrium probability measure $\bar{e}_A$. Given $X^i_0$, the process $\{X^i_t\}_{t \geq 0}$ from time $0$ is a continuous-time simple random walk, and the reversed process $\{X^i_t\}_{t < 0}$ is a continuous time simple random walk conditional that $\{ H_A=\infty \}$ and independent of the process from time $0$. Moreover, conditional on  $N_A$, all these $N_A$ paths are independent. The set of vertices occupied by at least one of these paths is called the interlacement set at level $u$, denoted by $\mathcal{I}^u$. 

For $x \in \mathbbm{Z}^d$, let $L^u(x)$ denote the local time of $\mathcal{I}^u$ at the vertex $x$. Given two functions $f,h:\mathbbm{Z}^d \rightarrow \mathbbm{R}$, we write $\langle f,h \rangle =\sum_{x\in \mathbbm{Z}^d} f(x) h(x)$ for their inner product. The following lemma gives the Laplace transform of $\langle L^u, e_A \rangle$ which helps to bound the local time of interlacements.

\begin{lemma}
\label{lem:laplace-transf}
For $A \subset\subset \mathbbm{Z}^d$ and $s<1$, 
$$
\mathbbm{E}\left[e^{s \langle L^u, e_A \rangle}\right]=\exp \left( \frac{us \cdot {\rm cap}(A) }{1- s}  \right) \,.
$$
\end{lemma}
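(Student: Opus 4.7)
The plan is to combine a Poisson decomposition of the interlacement over trajectories with an explicit Feynman--Kac computation for the Laplace transform of a single trajectory.

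By the construction in \eqref{eq:def-RI}, since $e_A$ is supported on $A$, only trajectories that intersect $A$ contribute to $\langle L^u, e_A\rangle$; there are $N_A \sim \mathrm{Poisson}(u\cdot {\rm cap}(A))$ of them, i.i.d.\ in law. Moreover, the backward half $\{X^i_t\}_{t<0}$ of each such trajectory is a simple random walk conditioned on $\{H_A = \infty\}$ and hence never visits $A$, so only the forward half contributes. Setting $F_i := \int_0^\infty e_A(X^i_t)\,dt$ with $X^i_0\sim \bar{e}_A$, and applying the standard identity $\mathbbm{E}[z^{N_\lambda}] = e^{\lambda(z-1)}$ for Poisson compounding, one obtains
$$
\mathbbm{E}\bigl[e^{s\langle L^u, e_A\rangle}\bigr] = \exp\Bigl(u\cdot {\rm cap}(A)\bigl(\mathbbm{E}_{\bar{e}_A}[e^{sF}] - 1\bigr)\Bigr),
$$
so the claim reduces to showing $\mathbbm{E}_{\bar{e}_A}[e^{sF}] = (1-s)^{-1}$ for $s<1$.

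For this, I would set $\varphi(y) := \mathbbm{E}_y[e^{sF}]$ on $\mathbbm{Z}^d$. Conditioning on the first $\mathrm{Exp}(1)$ holding time and the first jump of the walk (using memorylessness and the Markov property), a short calculation produces the Feynman--Kac identity
$$
\bigl(1 - s\,e_A(y)\bigr)\,\varphi(y) = \frac{1}{2d}\sum_{z \sim y} \varphi(z), \qquad y\in \mathbbm{Z}^d,
$$
equivalently $\Delta\varphi + s\,e_A\,\varphi = 0$, with boundary behavior $\varphi(y)\to 1$ as $|y|\to \infty$ (by transience of SRW in $d\ge 3$ and dominated convergence). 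Recalling the hitting potential $h(y) := \sum_x g(y,x)\,e_A(x) = P_y(H_A<\infty)$, which satisfies $\Delta h = -e_A$ and $h\equiv 1$ on $A$, a direct substitution shows that $\varphi = 1 + \tfrac{s}{1-s}\,h$ solves this equation with the correct decay at infinity; the probabilistic Feynman--Kac representation identifies this as the unique such solution. Since $h\equiv 1$ on $A$, this gives $\varphi\equiv (1-s)^{-1}$ on $A$, hence $\mathbbm{E}_{\bar{e}_A}[e^{sF}] = (1-s)^{-1}$.

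The only mildly delicate point is the a priori finiteness of $\varphi(y)$ needed to invoke the Feynman--Kac characterization; this is automatic once the explicit bounded ansatz is produced. I do not anticipate any serious obstacle, as the crux of the argument is the algebraic observation that the occupation time of a single SRW weighted by the equilibrium potential is exactly $\mathrm{Exp}(1)$ under $\bar{e}_A$, which is the cleanest ``explanation'' of the stated formula.
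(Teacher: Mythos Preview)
Your argument is correct and gives a self-contained derivation; the paper's proof takes a different, much shorter route. The paper simply invokes a general Laplace transform identity for the interlacement occupation field with potential $V$ (equation~(2.40) of \cite{S17}), specializes to $V = s\,e_A$ to obtain the formula for $|s|<1$, and then extends to $s\in(-\infty,-1]$ by analytic continuation. Your approach instead unfolds what that identity amounts to in this particular case: the Poisson compounding step plus the Feynman--Kac computation exhibiting $F\sim\mathrm{Exp}(1)$ under $P_{\bar e_A}$. The trade-off is clear: the paper's proof is a two-line citation but relies on an external black box, whereas yours is longer but explains \emph{why} the answer is so clean (the equilibrium-weighted occupation time of a single walk is exactly exponential) and requires no outside input beyond the basic interlacement construction.

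One small tightening of your finiteness remark: rather than saying finiteness ``is automatic once the explicit bounded ansatz is produced'' (which is slightly circular, since you need finiteness to derive the equation the ansatz solves), it is cleaner to first run the Feynman--Kac step for $s\le 0$, where $\varphi\le 1$ trivially and uniqueness is standard; this already gives $\mathbbm{E}_y[e^{sF}]=(1-s)^{-1}$ for $y\in A$ and $s\le 0$, hence $F\sim\mathrm{Exp}(1)$ under $P_{\bar e_A}$, and the formula for all $s<1$ follows immediately from the known MGF of the exponential law. This also sidesteps any need for analytic continuation.
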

\begin{proof}
For $|s|<1$, we can obtain this formula by taking $V(x)=s e_A(x)$ in $(2.40)$ of \cite{S17}. It can be extended to $s \in (-\infty, -1]$ by analytic extension.
\end{proof}

\section{The coarse-graining procedure}
\label{sec:coarse}
In this section, we introduce the coarse-graining procedure in \cite{S15} and the coupling of excursions in \cite{PT15, CGPV13}.

First, we define good boxes which are important objects in the coarse-graining arguments. We call an $L$-box $B_x$ \textbf{good} if either of the following two conditions holds. In particular, we will call it \\\textbf{Type-I good} if
\begin{equation}
\label{eq:condition1}
\max_{z \in \partial_i B_x} P_z\left[H_{\mathcal{I}^u \cap B_x} =\infty \right] <\delta \,,
\end{equation}
\textbf{Type-II good} if
\begin{equation}
\label{eq:condition2}
\langle L^u, \bar{e}_{B_x} \rangle <\delta u \,,
\end{equation}
and \textbf{bad} otherwise. The definition is similar in spirit to (3.11)-(3.13) in \cite{S15}. Roughly speaking, a good box is one in which the behavior of random interlacements matches its density (not necessarily equal to the intensity parameter). For a typical $L$-box, if the average value of $L^u$ in it is small, the condition \eqref{eq:condition1} holds. Otherwise, the average value of $L^u$ is not too small, and then the random interlacements will occupy a positive fraction of points and thus the condition \eqref{eq:condition2} holds with high probability. 

For $\rho>0$, define the event $\mathcal{A}$ by
\begin{equation}
\label{eq:def-a}
\mathcal{A} = \{ \mbox{there are at most }\rho (N/L)^d\mbox{ bad boxes in }B(0,N)\}\,.
\end{equation}In the next proposition, we will prove that for any $\rho>0$, with overwhelmingly high probability $\mathcal{A}$ happens (or equivalently, most $L$-boxes are good). 
\begin{proposition}
\label{prop:es-ac}
For all $\delta,\rho>0$ and $K\geq 100$
\begin{equation}\label{eq:es-ac}
\lim_{N \rightarrow \infty} \frac{1}{N^{d-2}} \log \mathbbm{P}[\mathcal{A}^c]=-\infty \,.
\end{equation}
\end{proposition}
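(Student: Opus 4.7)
The plan is a union bound over possible collections of bad $L$-boxes, combined with a single-box super-polynomial decay estimate and an approximate-independence argument across boxes. Let $M$ denote the number of $L$-boxes in $B(0,N)$; by \eqref{eq:def-L} one has $M \asymp (N/L)^d \asymp N^{d-2}/\log N$. Writing $m := \lceil \rho M \rceil$, the bound
\begin{equation*}
\mathbbm{P}[\mathcal{A}^c] \;\leq\; \binom{M}{m}\,\max_{\mathcal{C}}\,\mathbbm{P}\bigl[\text{every } B_x \in \mathcal{C} \text{ is bad}\bigr]
\end{equation*}
holds with the maximum over all collections $\mathcal{C}$ of $m$ distinct $L$-boxes. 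The combinatorial prefactor is $\leq 2^M = \exp(o(N^{d-2}))$, so it suffices to prove $\mathbbm{P}[\text{every }B_x\in\mathcal{C}\text{ bad}]\leq \exp(-c_\delta\, mL^{d-2})$ for some $c_\delta>0$: since $mL^{d-2}\asymp \rho\,N^{(d-2)(d+2)/d}/(\log N)^{2/d}$ dominates $N^{d-2}$ for every $d\geq 3$, this yields \eqref{eq:es-ac}.

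For the single-box bound $\mathbbm{P}[B_x\text{ bad}] \leq \exp(-c_\delta L^{d-2})$, I use that both \eqref{eq:condition1} and \eqref{eq:condition2} must fail. When $\delta$ is not too small (say $\delta\geq 1$), the Type-II failure $\langle L^u,\bar e_{B_x}\rangle\geq \delta u$ exceeds the typical level $u$, and Markov's inequality applied to $e^{s\langle L^u,e_{B_x}\rangle}$ via Lemma~\ref{lem:laplace-transf} with an optimal $s\in(0,1)$ yields decay $\exp(-c_\delta u\,{\rm cap}(B_x)) = \exp(-c'_\delta L^{d-2})$. When $\delta<1$, the Type-II failure is typical and I instead control the Type-I failure: by Green-function estimates for hitting probabilities, $\max_{z\in\partial_i B_x}P_z[H_{\mathcal{I}^u\cap B_x}=\infty]\geq \delta$ forces a capacity deficit ${\rm cap}(\mathcal{I}^u\cap B_x)\leq (1-\eta){\rm cap}(B_x)$ with $\eta=\eta(\delta)>0$, which is exponentially rare in $L^{d-2}$ because interlacement at positive intensity, built from an $\asymp u\,{\rm cap}(B_x)$ Poissonian family of SRW trajectories starting from $\bar e_{B_x}$, almost surely fills a positive fraction of $B_x$ in capacity.

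To upgrade the single-box bound to the joint product estimate over $\mathcal{C}$, I appeal to the soft local time coupling of \cite{PT15, CGPV13}. Since $K\geq 100$, the enlarged boxes $D_x = B(x,KL)$ attached to the $B_x\in\mathcal{C}$ are pairwise disjoint, and soft local time couples the interlacement excursion processes into the different $D_x$ with an \emph{independent} Poissonian excursion system, with a global Radon-Nikodym factor of $\exp(o(mL^{d-2}))$. Since $\{B_x \text{ bad}\}$ is measurable with respect to the excursions in $D_x$, this yields
\begin{equation*}
\mathbbm{P}[\text{every }B_x\in\mathcal{C}\text{ bad}] \;\leq\; e^{o(mL^{d-2})}\prod_{B_x\in\mathcal{C}}\mathbbm{P}[B_x\text{ bad}] \;\leq\; \exp(-c_\delta' m L^{d-2}),
\end{equation*}
completing the proof. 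The main obstacle is the single-box estimate: the joint failure of \eqref{eq:condition1}--\eqref{eq:condition2} requires a case split on $\delta$, and the small-$\delta$ regime relies on a quantitative capacity large-deviation bound for the interlacement trace; the decoupling step, by contrast, is a by-now standard application of soft local time.
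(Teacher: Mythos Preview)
Your overall plan---union bound over collections of bad $L$-boxes, a single-box estimate, and decoupling via soft local time---is the same as the paper's. The gaps are in the two ingredients.

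For the single-box bound, the route through a capacity deficit does not work as stated. The implication ``$\max_{z}P_z[H_{\mathcal I^u\cap B_x}=\infty]\ge\delta$ forces ${\rm cap}(\mathcal I^u\cap B_x)\le(1-\eta)\,{\rm cap}(B_x)$ with $\eta=\eta(\delta)>0$'' is not a consequence of Green-function estimates: by the sweeping identity ${\rm cap}(B_x)-{\rm cap}(S)=\sum_{z\in\partial_iB_x}e_{B_x}(z)\,P_z[H_S=\infty]$, a single point $z_0$ with escape probability $\ge\delta$ only guarantees a deficit $\ge\delta\,e_{B_x}(z_0)$, and $e_{B_x}(z_0)$ can be as small as $cL^{-1}$, far short of a fixed fraction of ${\rm cap}(B_x)\asymp L^{d-2}$. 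Even granting a macroscopic deficit, the claim that it is $\exp(-cL^{d-2})$-rare is essentially the upper bound of Theorem~\ref{thm:1-1} at scale $L$ and would be circular without a separate argument. The paper avoids this entirely: under the coupled \emph{independent} excursion law $\mathbbm Q$ it bounds the probability that $M\asymp L^{d-2}$ i.i.d.\ excursions fail to be hit by a fresh walk from some $z\in\partial_iB_x$ via two-walk intersection estimates (Theorem~3.3.2 of \cite{L91}), obtaining $\mathbbm Q[(\mathcal A^2_x)^c]\le\exp(-cL^{d/2})$ for $d\ge5$ and $\exp(-cL^2/\log L)$ for $d=4$---only super-polynomial in $L$, strictly weaker than your claimed $\exp(-cL^{d-2})$ when $d\ge4$. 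This weaker rate already suffices: since $M\asymp N^{d-2}/\log N$ and $\log L\asymp\log N$, one has $2^M p_L^{\rho M}=\exp(-\omega(N^{d-2}))$ as soon as $p_L$ decays faster than any fixed power of $L$.

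For the decoupling, soft local time does not produce a Radon--Nikodym factor. It provides a coupling $\widehat{\mathbbm Q}$ under which the interlacement excursions through each $B_x$ are \emph{sandwiched} between two independent Poisson excursion clouds of nearby intensities, on an event whose failure is itself $\exp(-cL^{d-2})$-rare (this is $\mathcal A^1_x$ in the paper, cf.\ \eqref{eq:com-1}--\eqref{eq:com-2}). The bad event $\{B_x\text{ bad}\}$ is thus not directly comparable to a $\mathbbm Q$-event; the paper instead defines auxiliary events $\mathcal A^1_x,\mathcal A^2_x,\mathcal A^3_x$ measurable with respect to the independent system, shows their intersection forces $B_x$ to be good under the coupling, and only then uses genuine independence under $\mathbbm Q$ to multiply probabilities across boxes.
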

Before giving the proof, we need some preparations.

Let $\mathbbm{Q}$ denote the law of a series of Poisson point processes indexed by $x \in (2K+1)L \mathbbm{Z}^d$. Given $x \in (2K+1)L \mathbbm{Z}^d$, let $(X^i)_{i \geq 1}$ be i.i.d. distributed excursions from $\partial_i B_x$ to $\partial D_x$ with the law of a simple random walk started from a point in $\partial_i B_x$ sampled according to $\bar e_{B_x}$ and ending upon leaving the box $D_x$. Let $n(t)$ denote the counting measure of the Poisson process whose intensity is ${\rm cap}(B_x)$. An important feature is that $(X^i)_{i \geq 1}$ and $n(t)$ are independent triples among different $x$'s.

Given $x \in (2K+1)L \mathbbm{Z}^d \cap B(0,N)$, let $Y^1, \ldots, Y^{N_u}$ denote the excursions from $\partial_i B_x$ to $\partial D_x$ induced by $\mathcal{I}^u$ where $N_u$ is the number of excursions (one can see (2.29), (2.41) and (2.42) in \cite{S15} for more precise definition). These excursions depend on the vertex $x$, but we omit it in the notation for simplicity. We write 
$$
\Theta=\Theta(L)={\rm cap}(B_x)
$$
for short. We know that there exists $a>1$ such that for all $y \in \partial_i B_x$ and $z \in (D_x)^c$
$$
\frac{1}{a} \bar{e}_{B_x} < P_z \left[H_{B_x}=H_y|H_{B_x}<\infty\right] < a \bar{e}_{B_x}\,.
$$

By the soft local technique (see Lemma 2.1 in \cite{CGPV13} and Section 5 in \cite{S15} for the specific case of interlacements), we can find a coupling of $\mathbbm{P}$ and $\mathbbm{Q}$ (which will be denoted as $\widehat{\mathbbm{Q}}$) such that
\begin{itemize}
\item If $n\Big( \frac{\delta u}{a^2}\Big) \geq \frac{\delta u}{a^3} \Theta$ and $N_u \leq \frac{\delta u}{a^3} \Theta$, we have
\begin{equation}
\label{eq:com-1}
\{Y^1, \ldots Y^{N_u} \} \subset \{X^1, \ldots, X^{n(\delta u/a)} \}\,.
\end{equation}
\item If $n\Big(\frac{\delta u}{a^4}\Big)  \leq \frac{\delta u}{a^3} \Theta$ and $N_u >\frac{\delta u}{a^3} \Theta$, we have
\begin{equation}
\label{eq:com-2}
\{Y^1, \ldots Y^{N_u} \} \supset \{X^1, \ldots, X^{n(\delta u/a^5)} \}\,.
\end{equation}
\end{itemize}

We are now ready to prove Proposition~\ref{prop:es-ac}.

\begin{proof}

For $x \in (2K+1)L \mathbbm{Z}^d \cap B(0,N)$, let $\mathcal{A}^1_x$ denote the event that all of the following inequalities hold:
\begin{equation}
\label{eq:3.1compare}
n\Big(\frac{\delta u}{a^4}\Big)  \leq \frac{\delta u}{a^3} \Theta\,; \quad n\Big(\frac{\delta u}{a}\Big) \leq \delta u \Theta\,;\quad  n\Big(\frac{\delta u}{a^4}\Big)  \leq \frac{\delta u}{a^3} \Theta \,; \quad n\Big( \frac{\delta u}{a^5}\Big) \geq \frac{\delta u}{a^6} \Theta\,.
\end{equation}
 Since $\mathbbm{E} [n(t)] =  \Theta t$, it is easy to see from Hoeffding's inequality and the fact that $\Theta \geq CL^{d-2}$ that 
\begin{equation}
\label{eq:prop3.1-1}
\mathbbm{Q}\big[(\mathcal{A}^1_x)^c\big] \leq \exp\big(-cL^{d-2}\big)\,.
\end{equation}
If the event $\mathcal{A}^1_x$ happens, we know from \eqref{eq:com-1} and \eqref{eq:com-2} that when $N_u \leq \frac{\delta u}{a^3} \Theta$
$$
\{ Y^1 , Y^2  , ... ,Y^{N_u} \} \subset \{ X^1 , X^2 , ..., X^{ \delta u \Theta } \}  \,,
$$
while when $N_u > \frac{\delta u}{a^3} \Theta$
$$
 \{ Y^1, Y^2 ,...,Y^{N_u} \} \supset \{ X^1 , X^2 , ..., X^{ \delta u \Theta/a^6  } \} \,.
$$
We write $M = \frac{\delta u}{a^6} \Theta$. Note that $M \geq cL^{d-2}$. Let $\mathcal{A}^2_x$ denote the event that $$\max_{z \in \partial_i B_x} P_z[H_{(X^1 \cup X^2 \cup \ldots \cup X^M) \cap B_x} = \infty] <\delta \,.$$
Then,
\begin{equation*}
\begin{split}
\mathbbm{Q}\big[(\mathcal{A}^2_x)^c\big] &\leq  \sum_{z \in \partial_i B_x} \mathbbm{Q}\left[P_z\left[H_{(X^1 \cup X^2 \cup \ldots \cup X^M) \cap B_x} =\infty\right]\geq \delta \right]\\
& \leq  \sum_{z \in \partial_i B_x} \frac{1}{\delta} \mathbbm{Q}\otimes P_z\left[H_{(X^1 \cup X^2 \cup \ldots \cup X^M) \cap B_x} =\infty \right]\\
&= \sum_{z \in \partial_i B_x} \frac{1}{\delta} \mathbbm{Q}\otimes P_z\left[Z \cap(X^1 \cup X^2 \cup \ldots \cup X^M) \cap B_x =\emptyset \right] \\
&= \sum_{z \in \partial_i B_x} \frac{1}{\delta}P_z[ Z \cap X^1 \cap B_x = \emptyset]^{M} \leq C L^{d-1} P_z[ Z \cap X^1 \cap B_x = \emptyset]^{cL^{d-2}}.
\end{split}
\end{equation*}
Here, $Z$ is the trajectory of a simple random walk starting from $z$. By Theorem $3.3.2$ in \cite{L91}, $ P_z[ Z \cap X^1 \cap B_x = \emptyset]\leq 1-c$ if $d=3$; $1- \frac{c}{\log L}$ if $d=4$ and $1- cL^{(4-d)/2}$ if $d \geq 5$. So,
\begin{equation}
\label{eq:prop3.1-2}
\lim_{L \rightarrow \infty} \frac{1}{\log(L)} \log \mathbbm{Q}\big[(\mathcal{A}^2_x)^c\big] =-\infty \,.
\end{equation}
Let $R = \delta u \Theta$. We consider $R$ independent simple random walk trajectories $X^1,\ldots, X^R$ and their local times $L^1(\cdot), \ldots, L^R(\cdot)$. Let $\mathcal{A}^3_x$ denote the event that 
$$
\sum_{i=1}^R \langle L^i , \bar{e}_{B_x} \rangle < \delta u \,.
$$
Now, we will show that
\begin{equation}
\label{eq:prop3.1-3}
\lim_{L \rightarrow \infty} \frac{1}{\log(L)} \log \mathbbm{Q}\big[(\mathcal{A}^3_x)^c\big] =-\infty \,.
\end{equation}
This can be derived directly from the fact that $\langle L^i, e_{B_x} \rangle $ is dominated by an exponential random variable with mean smaller than one.

With \eqref{eq:prop3.1-1}, \eqref{eq:prop3.1-2} and \eqref{eq:prop3.1-3} in hand, we now turn to \eqref{eq:es-ac}. Note that
$$
B_x \mbox{ is good on the event }\mathcal{A}^1_x \cap \mathcal{A}^2_x \cap \mathcal{A}^3_x \,.
$$
There are two cases.
\begin{itemize}
\item If $N_u \leq \frac{\delta u}{a^3} \Theta$, then by \eqref{eq:com-1} and \eqref{eq:3.1compare}
$$
\mathcal{I}^u \cap B_x \subset X^1 \cup \ldots \cup X^{n(\delta u/a)} \subset X^1 \cup \ldots \cup X^{R}.
$$
Together with the definition of $\mathcal{A}^3_x$, we know that in this case $\langle L^u, \bar e_{B_x} \rangle <  \delta u$.
\item If $N_u > \frac{\delta u}{a^3} \Theta$, then by \eqref{eq:com-2} and \eqref{eq:3.1compare}
$$
\mathcal{I}^u \cap B_x \supset X^1 \cup \ldots \cup X^{n(\delta u/a^5)} \supset X^1 \cup \ldots \cup X^{M}.
$$
Together with the definition of $\mathcal{A}^2_x$, we know that in this case $\max_{z \in \partial_i B_x} P_z[H_{\mathcal{I}^u \cap B_x } = \infty] <\delta$.
\end{itemize} 

Now, Proposition~\ref{prop:es-ac} just follows from  \eqref{eq:prop3.1-1}, \eqref{eq:prop3.1-2}, \eqref{eq:prop3.1-3} and the fact that the behavior of $L$-boxes are independent under the law $\mathbbm{Q}$ (here we will also need the fact that $N^2 \log L \geq cL^d$ from \eqref{eq:def-L}).
\end{proof}

\section{Basic properties of $f$ and Brownian capacity}\label{sec:4}
In this section, we study the properties of $f$ from the constraint problem (recall \eqref{eq:def-f} for its definition) and prove Proposition~\ref{prop:property-bc} which relates the Brownian capacity of the coarse-grained sets to their discrete capacity, which is important in the proof of Theorem~\ref{thm:1-1}. 

In the following proposition, we collect some basic properties of the function $f$.
\begin{proposition}
\label{prop:property-f} 
The function $f$ defined in \eqref{eq:def-f} has the following properties:
\begin{itemize}
\item[{\rm (1).}]$f(\lambda)$ is decreasing.
\item[{\rm (2).}]$f(0)=\widetilde{{\rm cap}}(\widetilde{B}(0,1))$, and $f(\lambda)=0$ for all $\lambda \geq \widetilde{{\rm cap}}(\widetilde{B}(0,1))$.
\item[{\rm (3).}]$f(\lambda) \geq \widetilde{{\rm cap}}(\widetilde{B}(0,1)) - \lambda$, for all $0 < \lambda < \widetilde{{\rm cap}}(\widetilde{B}(0,1))$.
\item[{\rm (4).}]$f(\lambda)$ is continuous.
\end{itemize}
\end{proposition}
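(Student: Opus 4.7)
\emph{Plan.} Properties (1)--(3) follow directly from the definition together with monotonicity and subadditivity of Brownian capacity. For (1), the inclusion $\{A \text{ nice}:\widetilde{{\rm cap}}(A)\le\lambda_1\}\subset\{A \text{ nice}:\widetilde{{\rm cap}}(A)\le\lambda_2\}$ for $\lambda_1\le\lambda_2$ gives $f(\lambda_1)\ge f(\lambda_2)$. For (2), the only nice set of zero capacity is $A=\emptyset$ (any nonempty box has positive Brownian capacity), giving $f(0)=\widetilde{{\rm cap}}(\widetilde{B}(0,1))$; and $A=\widetilde{B}(0,1)$ is itself a (single-box) nice set realizing $f(\lambda)=0$ when $\lambda\ge\widetilde{{\rm cap}}(\widetilde{B}(0,1))$. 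For (3), applying subadditivity to $\widetilde{B}(0,1)\subset A\cup(\widetilde{B}(0,1)\setminus A)$ yields
\begin{equation*}
\widetilde{{\rm cap}}(\widetilde{B}(0,1))\le\widetilde{{\rm cap}}(A)+\widetilde{{\rm cap}}(\widetilde{B}(0,1)\setminus A)\le\lambda+\widetilde{{\rm cap}}(\widetilde{B}(0,1)\setminus A)
\end{equation*}
for every feasible $A$, after which the infimum gives the claimed lower bound.

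The main work is continuity in (4). Since $f$ is decreasing by (1), continuity at $\lambda$ amounts to both left and right continuity, and by (2) together with the squeeze from (3) only $\lambda\in(0,\widetilde{{\rm cap}}(\widetilde{B}(0,1)))$ needs attention. The central tool I would use is continuity of Brownian capacity under continuous deformations of regular sets: for any nice $A=\bigcup_i B(c_i,L_i)$, the shrinking family $A^t=\bigcup_i B(c_i,(1-t)L_i)$, $t\in[0,1]$, consists of regular sets depending continuously on $t$ in the Hausdorff metric, and the maps $t\mapsto\widetilde{{\rm cap}}(A^t)$ and $t\mapsto\widetilde{{\rm cap}}(\widetilde{B}(0,1)\setminus A^t)$ are continuous in $t$ away from degenerate coincidences (such as $A=\widetilde{B}(0,1)$ exactly, where even an infinitesimal shrinking produces a shell of capacity $\widetilde{{\rm cap}}(\widetilde{B}(0,1))$).

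For \emph{left continuity} at $\lambda$, I would fix $\epsilon>0$ and pick a near-optimal $A^*$ with $\widetilde{{\rm cap}}(A^*)\le\lambda$ and $\widetilde{{\rm cap}}(\widetilde{B}(0,1)\setminus A^*)\le f(\lambda)+\epsilon$. If $\widetilde{{\rm cap}}(A^*)<\lambda$, then $A^*$ is already feasible for $\mu$ slightly below $\lambda$, giving $f(\mu)\le f(\lambda)+\epsilon$. Otherwise I would apply the shrinking family above: for a sufficiently small $t>0$, the nice set $A'=A^{*t}$ satisfies $\widetilde{{\rm cap}}(A')<\lambda$ and, by the capacity continuity, $\widetilde{{\rm cap}}(\widetilde{B}(0,1)\setminus A')\le f(\lambda)+2\epsilon$; feasibility of $A'$ at every $\mu\in[\widetilde{{\rm cap}}(A'),\lambda)$ then gives $f(\mu)\le f(\lambda)+2\epsilon$, as desired.

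For \emph{right continuity} at $\lambda$, which I expect to be the main obstacle, I would argue by contradiction: if $\lim_{\mu\downarrow\lambda}f(\mu)=f(\lambda)-\eta$ for some $\eta>0$, pick $\mu_n\downarrow\lambda$ and nice $A_n$ with $\widetilde{{\rm cap}}(A_n)\le\mu_n$ and $\widetilde{{\rm cap}}(\widetilde{B}(0,1)\setminus A_n)\le f(\lambda)-\eta/2$. Discarding any boxes of $A_n$ disjoint from $\widetilde{B}(0,1)$ only lowers $\widetilde{{\rm cap}}(A_n)$ without altering $\widetilde{B}(0,1)\setminus A_n$, so without loss of generality each $A_n$ lies in a fixed bounded region. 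The plan is then to extract a subsequential Hausdorff-convergent limit $A^*$ (in the class of regular sets if needed), deduce $\widetilde{{\rm cap}}(A^*)\le\lambda$ and $\widetilde{{\rm cap}}(\widetilde{B}(0,1)\setminus A^*)\le f(\lambda)-\eta/2$ from continuity of Brownian capacity under such convergence, and finally approximate $A^*$ by a nice set to contradict the definition of $f(\lambda)$. The difficulty is that naive uniform scaling of the $A_n$ need not keep $\widetilde{{\rm cap}}(\widetilde{B}(0,1)\setminus A_n)$ continuous precisely at the degenerate configurations where the $A_n$ approach $\widetilde{B}(0,1)$, so the limit extraction must be carried out in a topology where Brownian capacity is jointly continuous and where regularity of the limit can be verified.
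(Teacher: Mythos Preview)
Your treatment of (1)--(3) matches the paper. For (4), however, both halves of your plan have genuine gaps.

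\textbf{Left continuity via shrinking.} You note that $t\mapsto\widetilde{{\rm cap}}(\widetilde B(0,1)\setminus A^t)$ may fail to be continuous at $t=0$ when $A^*$ coincides with $\widetilde B(0,1)$, but this is not an isolated degeneracy you can dismiss. Whenever $f(\lambda)+\epsilon<\widetilde{{\rm cap}}(\widetilde B(0,1))$, any near-optimizer $A^*$ must cover a substantial portion of $\partial\widetilde B(0,1)$ (otherwise $\widetilde B(0,1)\setminus A^*$ would contain a thick piece of the boundary and already have near-full capacity). Shrinking the boxes of such an $A^*$ exposes part of $\partial\widetilde B(0,1)$, and the complement's capacity can jump straight to $\widetilde{{\rm cap}}(\widetilde B(0,1))$. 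So the ``degenerate'' case is in fact the typical one, and your argument does not cover it.

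\textbf{Right continuity via compactness.} This is only a plan, and the plan does not work as stated: Brownian capacity is not continuous under Hausdorff convergence of compact sets (thin perforated sets can Hausdorff-approximate solid ones while having much smaller capacity, and the nice-set class is not closed under such limits). You acknowledge this difficulty but leave it unresolved.

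The paper sidesteps both issues with a single device. It proves \emph{uniform} left continuity: for each $\Delta>0$ there is $\epsilon=\epsilon(\Delta)$, independent of $\lambda$, with $f(\lambda-\epsilon)\le f(\lambda)+\Delta$ for all $\lambda$; for a monotone $f$ this already gives full continuity (apply the inequality at $\mu\in(\lambda,\lambda+\epsilon)$ to get $f(\lambda)\le f(\mu-\epsilon)\le f(\mu)+\Delta$). To produce the modified competitor, instead of shrinking all boxes the paper fixes a fine grid of $\tfrac1N$-boxes with $N=N(\Delta)$, finds a single boundary box $E$ in which $A$ occupies at least half the volume (such $E$ must exist, or else $\widetilde B(0,1)\setminus A$ would already have capacity $\ge\widetilde{{\rm cap}}(\widetilde B(0,1))-\Delta/2$), and sets $A'=A\setminus E$. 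Removing $E$ raises $\widetilde{{\rm cap}}(\widetilde B(0,1)\setminus A')$ by at most $\widetilde{{\rm cap}}(E)\le\Delta/2$; and because $E$ sits on $\partial\widetilde B(0,1)$, a Brownian motion from $\partial\widetilde B(0,2)$ has a uniformly positive probability (depending only on $N$, hence on $\Delta$) of hitting $A\cap E$ and escaping to infinity without touching $A'$. This forces $\widetilde{{\rm cap}}(A)\ge\widetilde{{\rm cap}}(A')+\epsilon(\Delta)$, completing the argument with no compactness and no limiting procedure.
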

\begin{proof}
Claims (1) and (2) are direct from the definition. Claim (3) follows from the sub-additivity of the Brownian capacity. Next, we prove Claim (4). Since $f$ is decreasing, (here we hold the convention that $f(\lambda)=\widetilde{{\rm cap}}( \widetilde B(0,1))$ when $\lambda<0$), it suffices to show that for any $\Delta>0$, there exists $\epsilon = \epsilon(\Delta)>0$ such that for all $\lambda \geq 0$
\begin{equation}
\label{eq:prop4.1-1}
f(\lambda) \geq f(\lambda -\epsilon) - \Delta\,.
\end{equation}
Fix $\Delta>0$ and a constant $\epsilon$ to be chosen. For all $\lambda\geq 0$, by the definition of $f$, there exists a nice set $A \subset \widetilde B(0,1)$ such that
$$
\widetilde {\rm cap}(A) \leq \lambda \quad \mbox{and} \quad \widetilde {\rm cap}(\widetilde B(0,1) \backslash A) \leq f(\lambda) +\frac{\Delta}{2}\,.
$$
In order to prove \eqref{eq:prop4.1-1}, it suffices to find a nice set $A' \subset A$ such that
\begin{equation}
\label{eq:prop4.1-2}
\widetilde {\rm cap}(A') \leq (\lambda-\epsilon)_+ \quad \mbox{and} \quad \widetilde {\rm cap}(\widetilde B(0,1) \backslash A') \leq f(\lambda) +\Delta\,.
\end{equation}
If $f(\lambda) +\Delta \geq \widetilde{{\rm cap}}(\widetilde B(0,1))$, we can take $A'=\emptyset$ and then \eqref{eq:prop4.1-2} holds. From now on, we assume that $f(\lambda) +\Delta < \widetilde{{\rm cap}}(\widetilde B(0,1))$. There exists a large integer $N=N(\Delta)$ such that
\begin{itemize}
\item For all $\frac{1}{N}$-box $E$, we have $\widetilde{\rm cap}(E) \leq \frac{\Delta}{2}$.
\item If $|K \cap E| \geq \frac{1}{2}|E|$ for all $\frac{1}{N}$-box $E$ on the boundary of $\widetilde B(0,1)$ (viz.\ $E \subset \widetilde B(0,1)$ and $E \cap \partial  \widetilde B(0,1) \neq \emptyset$), then $\widetilde{{\rm cap}}(K) \geq \widetilde{{\rm cap}}(\widetilde B(0,1)) - \frac{\Delta}{2}$.
\end{itemize}
The first claim follows form the scaling property of $\widetilde{\rm cap}(\cdot)$ and the second claim follows from a Wiener-type argument, see e.g.\ Theorem 2.2.5 in \cite{L91}.
Since $\widetilde {\rm cap}(\widetilde B(0,1) \backslash A) \leq f(\lambda) +\frac{\Delta}{2} < \widetilde{{\rm cap}}(\widetilde B(0,1)) - \frac{\Delta}{2}$, we can find a $\frac{1}{N}$-box $E$ on the boundary of $\widetilde B(0,1)$ such that 
$$
|(\widetilde B(0,1) \backslash A) \cap E| < \frac{1}{2}|E|\,,
$$
and so $|A \cap E| > \frac{1}{2}|E|$. Take $A' = A \backslash E$. Then,
$$
\widetilde{{\rm cap}}(\widetilde B(0,1) \backslash A') \leq \widetilde{{\rm cap}}(\widetilde B(0,1) \backslash A)+\widetilde {\rm cap}(E) \leq f(\lambda)+\Delta
$$
and there exists $\epsilon = \epsilon(\Delta)>0$ such that
\begin{align*}
\widetilde {\rm cap}(A) &= \int_{x \in \partial \widetilde B(0,2)} W_x [ \widetilde H_A <\infty] \widetilde e_{\widetilde B(0,2)} dx \\
&\geq \int_{x \in \partial \widetilde B(0,2)} \left( W_x [ \widetilde H_{A'} <\infty] + W_x [ \widetilde H_{A \cap E} <\infty,\widetilde H_{A'} = \infty ] \right)\widetilde e_{\widetilde B(0,2)} dx \\
&\geq \widetilde {\rm cap}(A') + \epsilon \,.
\end{align*}
The first equation is by Theorem 1.10 in \cite{PS78}. The last inequality is because $|A \cap E| > \frac{1}{2}|E|$ implies that the Brownian motion starting from $\partial \widetilde{B}(0,2)$ has a (uniform) positive probability to hit $\widetilde{B}(0,1)$ at the set $A\cap E$ and then escape to infinity without ever hitting $A'$. In other words, there exists a constant $c(\Delta)>0$ such that
$$
\inf_{x \in \partial \widetilde B(0,2)}W_x [ \widetilde H_{A \cap E} <\infty,\widetilde H_{A'} = \infty ] >c(\Delta)\, ,
$$ 
enabling us to pick $\epsilon$ as $c(\Delta) \cdot \widetilde {\rm cap}(\widetilde B(0,2))$. Therefore, $A'$ satisfies \eqref{eq:prop4.1-2} and Claim (4) holds.
\end{proof}
\begin{remark}
By the continuity of $f$, the infimum in \eqref{eq:def-f} can also be taken over all the sets in $\mathbbm{R}^d$ with $C^1$ boundary. Finding the minimizer set $A$ (if it exists, which we believe is the case) in \eqref{eq:def-f} is an interesting question in itself. To the best knowledge of the authors there is neither a back-of-the-envelop quick solution, nor a ready answer in the literature. We hope experts in variational analysis could answer it. It is equally interesting consider more general sets rather than $\widetilde{B}(0,1)$. We note that except in very special cases, e.g., the union of two touching balls with a specifically chosen $\lambda$, there seems to be no trivial answer either.
\end{remark}

We now turn to the Brownian capacity. The next proposition is important in the proof of Theorem~\ref{thm:1-1}.
\begin{proposition}
\label{prop:property-bc}
Given $\delta>0$ and $K \geq 1$, there exist $c_4(\delta,K)$ and $c_5(\delta,K)$ such that if $L>c_4(\delta,K), \rho<c_5(\delta,K), t>0$, then for any $\mathcal{C}_1$ and $\mathcal{C}_2$ which are two disjoint collections of $L$-boxes in $\widetilde B(0,N)$ satisfying ${\rm Card}(\mathcal{C}_1)+{\rm Card}(\mathcal{C}_2) \geq (1-\rho)\big(\frac{2N}{(2K+1)L}\big)^d$ and $\widetilde {\rm cap}(\mathcal C_1) \leq tN^{d-2}$, we have
\begin{equation}\label{eq:property-bc}
\widetilde {\rm cap}(\mathcal C_2) \geq (f(t) - \delta) N^{d-2}.
\end{equation}
\end{proposition}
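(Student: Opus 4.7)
The plan is to prove this via a coarse-graining argument, combined with the definition of $f$ and the continuity result from Proposition~\ref{prop:property-f}(4).

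After rescaling by $1/N$, set $A_i = \mathcal{C}_i / N \subset \widetilde B(0,1)$; by scaling of Brownian capacity, $\widetilde{\rm cap}(A_1) \leq t$, and the goal becomes $\widetilde{\rm cap}(A_2) \geq f(t) - \delta$. A naive application of the definition of $f$ to $A_1$ only gives $\widetilde{\rm cap}(\widetilde B(0,1) \setminus A_1) \geq f(t)$, and the inclusion $A_2 \subset \widetilde B(0,1) \setminus A_1$ yields the wrong direction of inequality for $A_2$, since the complement also contains the ``corridor'' region between $L$-boxes (occupying a fraction $1 - (2/(2K+1))^d$ of $\widetilde B(0,1)$) and the bad-box region, both carrying non-negligible Brownian capacity.

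To overcome this, I would tile $\widetilde B(0,1)$ by the cubes $Q_y$ of side $(2K+1)L/N$ centered at each grid point $y/N$, and set $\hat A_i = \bigcup_{B_y \in \mathcal{C}_i} Q_y$ and $\hat R = \bigcup_{B_y \notin \mathcal{C}_1 \cup \mathcal{C}_2} Q_y$. Then $\hat A_1 \cup \hat R = \widetilde B(0,1) \setminus \hat A_2$ is a nice set (a finite union of cubes), and the definition of $f$ gives directly
\[
\widetilde{\rm cap}(\hat A_2) \geq f\bigl(\widetilde{\rm cap}(\hat A_1 \cup \hat R)\bigr).
\]
The proposition then reduces to two tasks: (i) showing $\widetilde{\rm cap}(\hat A_1 \cup \hat R) \leq t + O(\delta)$, so that by continuity of $f$ one obtains $\widetilde{\rm cap}(\hat A_2) \geq f(t) - O(\delta)$; and (ii) comparing $\widetilde{\rm cap}(A_2)$ to $\widetilde{\rm cap}(\hat A_2)$. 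For (ii), the choice $L = N^{2/d}(\log N)^{1/d}$ places the $L$-boxes of $\mathcal{C}_2$ (comprising a positive fraction of all grid points) deep in the Wiener saturation regime, since the sum of individual box capacities diverges as $(N/L)^2 \to \infty$. Combined with Lemma~\ref{lem:disc-cont}, this should yield $\widetilde{\rm cap}(A_2) \geq (1-\delta)\widetilde{\rm cap}(\hat A_2)$ for $L$ sufficiently large.

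The hard part is (i). The naive sub-additivity $\widetilde{\rm cap}(\hat A_1 \cup \hat R) \leq \widetilde{\rm cap}(\hat A_1) + \widetilde{\rm cap}(\hat R)$ does not give what is needed: $\widetilde{\rm cap}(\hat A_1)$ can exceed $\widetilde{\rm cap}(A_1) \leq t$ by a factor as large as $(2K+1)^{d-2}$ when the boxes of $\mathcal{C}_1$ are far from saturation, and $\widetilde{\rm cap}(\hat R)$ can be of order $\widetilde{\rm cap}(\widetilde B(0,1))$ under adversarial bad-cube placements (for example, a thin boundary shell has macroscopic capacity despite small volume $\leq \rho|\widetilde B(0,1)|$). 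Addressing this obstacle will require exploiting the finer structure: one would decompose the grid points in $S_1 \cup \{\text{bad}\}$ according to local density, use Wiener-type saturation estimates together with Lemma~\ref{lem:disc-cont} to convert the box-level bound $\widetilde{\rm cap}(A_1) \leq t$ into a cube-level bound on $\hat A_1 \cup \hat R$, and then choose $c_4(\delta, K)$ and $c_5(\delta, K)$ small enough that the resulting errors are absorbed within the continuity margin of $f$.
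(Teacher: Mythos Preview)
Your overall plan---enlarge $\mathcal{C}_1$ and $\mathcal{C}_2$ so that together they cover $\widetilde B(0,N)$, apply the definition of $f$, and control the capacity change under enlargement---is the right shape, but the geometric enlargement to full cubes $Q_y$ leaves both tasks (i) and (ii) with gaps you have not closed.

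For (ii), Wiener saturation is a \emph{local} statement: it requires every point $x\in\hat A_2$ to be surrounded by enough $L$-boxes of $\mathcal{C}_2$ at each dyadic scale. The global hypothesis that $\mathcal{C}_2$ occupies a positive fraction of grid points (itself unjustified) does not ensure this. If $\mathcal{C}_2$ contains an isolated box $B_y$ whose neighbouring grid cubes all lie in $\mathcal{C}_1$, then for $x\in Q_y$ at distance of order $KL$ from $B_y$ one has $W_x[\widetilde H_{A_2}<\infty]$ of order $(2K+1)^{2-d}$, so $\widetilde{\rm cap}(A_2)$ can fall short of $\widetilde{\rm cap}(\hat A_2)$ by a factor depending on $K$, not on $\delta$. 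Task (i) is the mirror image: isolated boxes in $\mathcal{C}_1$ inflate $\widetilde{\rm cap}(\hat A_1)$ over $\widetilde{\rm cap}(A_1)$ by the same $K$-dependent factor. A local-density decomposition would have to track, cube by cube, whether the surrounding boxes lie predominantly in $\mathcal{C}_1$ or in $\mathcal{C}_2$---at which point you are no longer working with a fixed geometric enlargement, and the argument has to be reorganised.

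The paper sidesteps both obstacles by enlarging \emph{probabilistically} rather than geometrically: set $\mathcal{C}_i' = \mathcal{C}_i \cup \{x\in\widetilde B(0,N): W_x[\widetilde H_{\mathcal{C}_i}=\infty]<\epsilon\}$. A Wiener-type argument applied to the \emph{full} collection of $L$-boxes (which does saturate $\widetilde B(0,N)$ once $L$ is large and $\rho$ is small) shows that every $x\in\widetilde B(0,N)$ is close in this sense to $\mathcal{C}_1$ or to $\mathcal{C}_2$, hence $\mathcal{C}_1'\cup\mathcal{C}_2'=\widetilde B(0,N)$. The capacity increase is then immediate by sweeping: $\widetilde{\rm cap}(\mathcal{C}_i')\leq\widetilde{\rm cap}(\mathcal{C}_i)+\epsilon\,\widetilde{\rm cap}(\widetilde B(0,N))$, since from any point of $\mathcal{C}_i'$ Brownian motion reaches $\mathcal{C}_i$ with probability at least $1-\epsilon$. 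One finishes by approximating $\mathcal{C}_1'$ by nice sets and invoking the continuity of $f$. This enlargement is tailored precisely so that the capacity comparison becomes a one-line estimate; your cube enlargement is not, and the ``finer structure'' repair you sketch would essentially have to rediscover this construction.
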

\begin{proof}
Let $\epsilon = \epsilon(\delta,K)>0$ be a constant to be chosen. By similar arguments as Corollary 6.5.9 of \cite{LV10}, we can choose $N$ and $\rho$ such that 
$$
\mbox{for all } x \in \widetilde B(0,N)\,, \mbox{ either } W_x[\widetilde H_{\mathcal{C}_1} = \infty] <\epsilon \mbox{ or } W_x[\widetilde H_{\mathcal{C}_2} = \infty] <\epsilon\,.
$$
We now consider
$$
\mathcal{C}_1' = \mathcal{C}_1 \cup \{ x \in \widetilde B(0,N) : W_x[\widetilde H_{\mathcal{C}_1} = \infty] <\epsilon \}\;\;\mbox{and}\;\;
\mathcal{C}_2' = \mathcal{C}_2 \cup \{ x \in \widetilde B(0,N) : W_x[\widetilde H_{\mathcal{C}_2} = \infty] <\epsilon \} \,.
$$
Then, we have $\mathcal{C}_1' \cup \mathcal{C}_2' = \widetilde B(0,N)$. By Theorem 1.10 in \cite{PS78},
\begin{equation*}
\begin{split}
\widetilde{{\rm cap}}(\mathcal{C}_1') &= \int_{x \in \partial \widetilde B(0,N)} W_x[\widetilde{H}_{\mathcal{C}_1'} <\infty] \widetilde{e}_{\widetilde B(0,N)}(dx) \\
&=\int_{x \in \partial \widetilde B(0,N)} \left( W_x[\widetilde{H}_{\mathcal{C}_1} <\infty] +W_x[\widetilde{H}_{\mathcal{C}_1'}<\infty, \widetilde{H}_{\mathcal{C}_1}=\infty]  \right) \widetilde{e}_{\widetilde B(0,N)}(dx)\\
& \leq \int_{x \in \partial \widetilde B(0,N)}  \left( W_x[\widetilde{H}_{\mathcal{C}_1} <\infty] +W_x[\widetilde{H}_{\mathcal{C}_1'}<\infty] \sup_{y \in \mathcal{C}_1'}W_y[\widetilde{H}_{\mathcal{C}_1}=\infty]  \right) \widetilde{e}_{\widetilde B(0,N)}(dx)\\
&\leq \widetilde{{\rm cap}}(\mathcal{C}_1) +\epsilon \cdot \widetilde{{\rm cap}}(\mathcal{C}_1') \leq t N^{d-2} + \epsilon \widetilde {\rm cap}(\widetilde B(0,N))\,.
\end{split}
\end{equation*}
Similarly, we have
$$
\widetilde{{\rm cap}}(\mathcal{C}_2') \leq \widetilde{{\rm cap}}(\mathcal{C}_2) + \epsilon \widetilde {\rm cap}(\widetilde B(0,N))\,.
$$
The set $\mathcal{C}_1'$ can be approximated below by a sequence of nice sets $\{D_n \}$ and we can apply the capacity lower bound obtained from the function $f$ to $\{\widetilde B(0,1) \backslash D_n \}$ (and in a similar fashion for $\mathcal{C}_2'$). By the continuity of $f$, i.e.\ Claim (4) in Proposition~\ref{prop:property-f}, we can choose $\epsilon$ such that \eqref{eq:property-bc} is satisfied.
\end{proof}

\section{Proof of Theorems~\ref{thm:1-1} and \ref{thm:1-2}}
\label{sec:5}
In this section, we will prove Theorem~\ref{thm:1-1}. The lower bound can be derived from the definition of $f$ because we can force the interlacements to stay within the blow-up of the minimizer of the constraint problem in \eqref{eq:def-f} and the probability matches the large deviation rate. For the upper bound, we need to use the coarse-graining procedure introduced in Section~\ref{sec:coarse} and enumerate on all possible collections of type-II good boxes. We will consider a quantity $H$ defined in \eqref{eq:def-h} which encapsulates the information of the local time in type-II good boxes. The upper bound then follows from controls on $H$, in particular the Laplace transform in Lemma~\ref{lem:laplace-transf}.

We begin with the lower bound.
\begin{proof}[Proof of the lower bound in Theorem~\ref{thm:1-1}]
Fix $\lambda>0$ and $\alpha>f(d\lambda)$. By Claim (4) in Proposition~\ref{prop:property-f}, we can choose a nice set $A$ such that $$\widetilde{{\rm cap}}(A) < d\lambda \quad \mbox{and} \quad \widetilde{{\rm cap}}(\widetilde{B}(0,1) \backslash A ) < \alpha\,.$$ Let $B=\widetilde{B}(0,1) \backslash A$. Recall that $A_N$ and $B_N$ stand for the blow-up of $A$ and $B$ respectively. Then, $A_N \cup 
B_N \supset B(0,N)$. By Lemma~\ref{lem:disc-cont2},
\begin{equation*}
\lim_{N \rightarrow \infty} \frac{1}{N^{d-2}}{\rm cap}(A_N) = \frac{1}{d}  \widetilde{{\rm cap}}(A) < \lambda \,,\quad\mbox{and}\quad\lim_{N \rightarrow \infty} \frac{1}{N^{d-2}}{\rm cap}(B_N) = \frac{1}{d}  \widetilde{{\rm cap}}(B) < \frac{\alpha}{d} \,.
\end{equation*}
So,
\begin{equation*}
\begin{split}
& \quad \liminf_{N \rightarrow \infty} \frac{1}{N^{d-2}} \log \mathbbm{P}[ {\rm cap} (B(0,N) \cap \mathcal{I}^u) < \lambda N^{d-2}] \geq \liminf_{N \rightarrow \infty} \frac{1}{N^{d-2}} \log \mathbbm{P}[ \mathcal{I}^u \cap B(0,N) \subset A_N ] \\
&\geq \liminf_{N \rightarrow \infty} \frac{1}{N^{d-2}} \log \mathbbm{P}[ \mathcal{I}^u \cap B_N =\emptyset ] =  \liminf_{N \rightarrow \infty} \frac{1}{N^{d-2}} \log  \left[ \exp (-u \cdot {\rm cap} (B_N)) \right]  >- \frac{u \alpha}{d} \,.
\end{split}
\end{equation*}
This holds for any $\alpha>f(d\lambda)$ and thus
\begin{equation*}
\liminf_{N \rightarrow \infty} \frac{1}{N^{d-2}} \log \mathbbm{P}[ {\rm cap} (B(0,N) \cap \mathcal{I}^u) < \lambda N^{d-2}] \geq -\frac{u}{d}f(d\lambda) \,.
\end{equation*}
This finishes the proof of the lower bound.
\end{proof}
We now turn to the upper bound. Recall \eqref{eq:def-a} that $\mathcal{A}$ represents the event that there are at most $\rho(N/L)^d$ bad boxes in $B(0,N)$. We write 
$$
\mathcal{B}=\big\{{\rm cap}[\mathcal{I}^u \cap B(0,N)] <\lambda N^{d-2}\big\}\,.
$$ Fix $0<\delta<1$.  Pick a large $N$ (and recall \eqref{eq:def-L} for its relation with $L$), $K$ and some $\rho$ such that
\begin{equation}
\label{eq:par-choice}
K \geq c_1(\delta) \vee c_3(\delta) \,; \quad L \geq c_2(\delta) \vee c_4(\delta,K) \,; \quad 0< \rho <c_5(\delta,K)\,.
\end{equation}
We now define a quantity which is abnormally small under the event $\mathcal{A} \cap \mathcal{B}$. Let $\mathcal{C}_1$ denote the union of Type-I good boxes and $\mathcal{C}_2$ denote the union of Type-II good boxes (recall \eqref{eq:condition1} and \eqref{eq:condition2} for the definition). Define $H$ as
\begin{equation}
\label{eq:def-h}
H = \langle L^u, e_{\mathcal{C}_2} \rangle \,.
\end{equation}
\begin{lemma} On the event $\mathcal{A} \cap \mathcal{B}$, we have
\begin{equation}
\label{eq:es-c2h}
{\rm cap}(\mathcal{C}_2) \geq \frac{1-\delta}{d}\left(f\left(\frac{d\lambda}{1-2\delta}\right)-\delta\right)N^{d-2} \quad \mbox{and} \quad H<2 u \delta {\rm cap}(\mathcal{C}_2)\,.
\end{equation}
\end{lemma}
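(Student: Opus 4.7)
The plan is to prove the two parts of \eqref{eq:es-c2h} separately, using the Type-I good condition \eqref{eq:condition1} for the lower bound on ${\rm cap}(\mathcal{C}_2)$ and the Type-II good condition \eqref{eq:condition2} for the upper bound on $H$. Without loss of generality I will assume $\mathcal{C}_1$ and $\mathcal{C}_2$ form a disjoint partition of the good $L$-boxes (e.g., put into $\mathcal{C}_2$ only those Type-II good boxes that fail \eqref{eq:condition1}), so that Proposition~\ref{prop:property-bc} applies; on $\mathcal{A}$, the count ${\rm Card}(\mathcal{C}_1)+{\rm Card}(\mathcal{C}_2) \geq (1-\rho')\bigl(\tfrac{2N}{(2K+1)L}\bigr)^d$ holds with a small $\rho'$ once the constant $\rho$ in \eqref{eq:par-choice} is chosen small enough to absorb the factor $(2K+1)^d$.

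For the capacity bound, the first step is to upgrade the local Type-I condition to the whole collection by showing
\[
P_z\bigl[H_{\mathcal{I}^u \cap \mathcal{C}_1} < \infty\bigr] \;\geq\; (1-\delta)\,P_z\bigl[H_{\mathcal{C}_1} < \infty\bigr] \qquad \text{for all } z \in \mathbbm{Z}^d.
\]
Since the $L$-boxes are pairwise separated by distance at least $(2K-1)L$, the inner boundary decomposes as $\partial_i \mathcal{C}_1 = \bigsqcup_{B_x \subset \mathcal{C}_1} \partial_i B_x$, and the strong Markov property at $H_{\mathcal{C}_1}$ combined with \eqref{eq:condition1} yields the inequality above. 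Sending $|z|\to\infty$ and using the standard asymptotic $P_z[H_A<\infty]/g(0,z) \to {\rm cap}(A)$ for bounded $A$, one obtains ${\rm cap}(\mathcal{I}^u \cap \mathcal{C}_1) \geq (1-\delta)\,{\rm cap}(\mathcal{C}_1)$. On $\mathcal{B}$ the left-hand side is at most ${\rm cap}(\mathcal{I}^u \cap B(0,N)) < \lambda N^{d-2}$, hence ${\rm cap}(\mathcal{C}_1) \leq \tfrac{\lambda}{1-\delta} N^{d-2}$. Lemma~\ref{lem:disc-cont} then gives $\widetilde{{\rm cap}}(\widetilde{\mathcal{C}_1}) \leq \tfrac{d\lambda}{(1-\delta)^2} N^{d-2} \leq \tfrac{d\lambda}{1-2\delta} N^{d-2}$ (using $(1-\delta)^2 \geq 1-2\delta$). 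Invoking Proposition~\ref{prop:property-bc} with $t = d\lambda/(1-2\delta)$ produces $\widetilde{{\rm cap}}(\widetilde{\mathcal{C}_2}) \geq \bigl(f(\tfrac{d\lambda}{1-2\delta}) - \delta\bigr) N^{d-2}$, and a final application of Lemma~\ref{lem:disc-cont} converts this into the required lower bound on ${\rm cap}(\mathcal{C}_2)$.

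For the bound on $H$, I would split the sum defining $H$ by the $L$-boxes of $\mathcal{C}_2$ and use Lemma~\ref{lem:compare-eq} to replace, on each $B_y \subset \mathcal{C}_2$, the restriction of $e_{\mathcal{C}_2}$ by $(1+\delta)\,\bar e_{B_y}\,e_{\mathcal{C}_2}(B_y)$:
\[
H \;=\; \sum_{B_y \subset \mathcal{C}_2}\sum_{x \in B_y} L^u(x)\,e_{\mathcal{C}_2}(x) \;\leq\; (1+\delta)\sum_{B_y \subset \mathcal{C}_2} e_{\mathcal{C}_2}(B_y)\, \langle L^u, \bar e_{B_y} \rangle.
\]
Inserting the Type-II bound \eqref{eq:condition2} and using $\sum_{B_y \subset \mathcal{C}_2} e_{\mathcal{C}_2}(B_y) = {\rm cap}(\mathcal{C}_2)$ yields $H < (1+\delta)\delta u\, {\rm cap}(\mathcal{C}_2) < 2\delta u\, {\rm cap}(\mathcal{C}_2)$ for $\delta \in (0,1)$.

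The main care needed is in the first inequality, where three $(1-\delta)$-type loss factors — from the Type-I upgrade, from Lemma~\ref{lem:disc-cont}, and from Proposition~\ref{prop:property-bc} — must be chained so as to produce precisely $\tfrac{d\lambda}{1-2\delta}$ inside $f$, together with the bookkeeping needed to match the rough count of good boxes on $\mathcal{A}$ against the fractional hypothesis of Proposition~\ref{prop:property-bc}; the remaining steps are direct.
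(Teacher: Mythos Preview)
Your proposal is correct and follows essentially the same route as the paper: the Type-I condition plus the strong Markov property yields ${\rm cap}(\mathcal{C}_1)\le \frac{\lambda}{1-\delta}N^{d-2}$, which is then fed through Lemma~\ref{lem:disc-cont} and Proposition~\ref{prop:property-bc}, while the $H$ bound comes from Lemma~\ref{lem:compare-eq} and \eqref{eq:condition2} exactly as you write. The only cosmetic difference is that the paper extracts the capacity comparison via the sweeping identity ${\rm cap}(A)=\sum_{x}e_{B(0,N)}(x)P_x[H_A<\infty]$ rather than your limit $P_z[H_A<\infty]/g(0,z)\to{\rm cap}(A)$; your explicit bookkeeping on the disjointness of $\mathcal{C}_1,\mathcal{C}_2$ and the $(2K+1)^d$ factor in the good-box count is in fact more careful than the paper's own write-up.
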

\begin{proof}
Suppose that the events $\mathcal{A}$ and $\mathcal{B}$ both happen.   Define a set $S$ as
\begin{equation*}
S=\cup_{B_x \in \mathcal{C}_1}(\mathcal{I}^u \cap B_x) \,.
\end{equation*}
Then, 
\begin{equation*}
\begin{split}
{\rm cap}(\mathcal{C}_1) &\;\,\overset{*}{=}\;\, \sum_{x \in \partial_i B(0,N)} e_{B(0,N)}(x) P_x[H_{\mathcal{C}_1}<\infty] \\
&\;\, \leq\;\; \sum_{x \in \partial_i B(0,N)} e_{B(0,N)}(x) \frac{P_x[H_S<\infty]}{\min_{z \in \mathcal{C}_1}P_z[H_S<\infty]} \quad (\mbox{by strong Markov property}) \\
&\overset{\eqref{eq:condition1}}{\leq} \sum_{x \in \partial_i B(0,N)} e_{B(0,N)}(x) \frac{1}{1-\delta}P_x[H_S<\infty]=\frac{1}{1-\delta}{\rm cap}(S)  \,\overset{**}{\leq}\, \frac{\lambda}{1-\delta}N^{d-2} \,.
\end{split}
\end{equation*}
where we use Lemma 1.12 in \cite{DRS14} for the equality marked with $*$ and use the definition of the event $\mathcal{B}$ for the inequality marked with $**$. We write $\widetilde{\mathcal{C}}_1$ and $\widetilde{\mathcal{C}}_2$ for $\mathbbm{R}^d$-fillings of $\mathcal{C}_1$ and $\mathcal{C}_2$ respectively. 
By Lemma~\ref{lem:disc-cont} and the fact that $L >c_2(\delta)$, $K >c_3(\delta)$,
$$
\widetilde{{\rm cap}}(\widetilde{\mathcal{C}}_1) \leq \frac{d}{1-\delta} {\rm cap}(\mathcal{C}_1) \leq \frac{d\lambda}{1-2 \delta}N^{d-2}\,.
$$
By $\mathcal{A}$, we have that ${\rm Card}({\mathcal{C}}_1) + {\rm Card}({\mathcal{C}}_2) \geq (1-\rho) \big(\frac{2N}{(2K+1)L}\big)^d$.
Hence, by Proposition~\ref{prop:property-bc} and the fact that $L>c_4(\delta,K), \rho<c_5(\delta,K)$, we have
$$
\widetilde{{\rm cap}}(\widetilde{\mathcal{C}}_2) \geq \left(f\left(\frac{d\lambda}{1-2\delta}\right)-\delta\right)N^{d-2}\,.
$$
Thus, by Lemma \ref{lem:laplace-transf},
\begin{equation}
\label{eq:estimate C_2}
{\rm cap}(\mathcal{C}_2) \geq \frac{1-\delta}{d} \widetilde{{\rm cap}}(\widetilde{\mathcal{C}}_2) \geq \frac{1-\delta}{d}  \left(f\left(\frac{d\lambda}{1-2\delta}\right)-\delta\right)N^{d-2}\,.
\end{equation}

We now turn to the upper bound of $H$. By Lemma~\ref{lem:compare-eq} and $K \geq c_1(\delta)$, we have
\begin{align*}
H &\;\,=\;\, \sum_{x \in \mathcal{C}_2} L^u(x) e_{\mathcal{C}_2}(x) \leq \sum_{x \in \mathcal{C}_2} L^u(x) (1+\delta) \bar{e}_B(x) e_{\mathcal{C}_2}(B)\\
&\;\, =\;\, (1+\delta) \sum_{B \in \mathcal{C}_2} e_{\mathcal{C}_2}(B) \sum_{x \in B} L^u(x) \bar e_B(x) \\
&\overset{\eqref{eq:condition2}}{<} (1+\delta) \sum_{B \in \mathcal{C}_2} e_{\mathcal{C}_2}(B) \delta u  \leq 2\delta u {\rm cap}(\mathcal{C}_2)\,.
\end{align*}
This finishes the proof.
\end{proof}
We now come back to the proof of Theorem~\ref{thm:1-1}.
\begin{proof}[Proof of the upper bound in Theorem~\ref{thm:1-1}]
By Lemma~\ref{lem:laplace-transf}, for each choice of $\mathcal{C}_2$, if $\mathcal{A} \cap \mathcal{B}$ happens
$$
\mathbbm{P}[H<2\delta u  {\rm cap}(\mathcal{C}_2)] \leq e^{2u \sqrt{\delta} {\rm cap}(\mathcal{C}_2)}\mathbbm{E}\Big[e^{-\frac{1}{\sqrt{\delta}}H} \Big]=\exp \left(2u \sqrt{\delta}{\rm cap}(\mathcal{C}_2)- \frac{u}{1+\sqrt{\delta}} {\rm cap}(\mathcal{C}_2)\right)\,.
$$
This together with \eqref{eq:es-c2h} gives
\begin{align*}
\mathbbm{P}[\mathcal{A} \cap \mathcal{B}] \;\,& \leq\;\sum_{E} \mathbbm{P}\left[\mathcal{A} \cap \mathcal{B} \cap \{\mathcal{C}_2 = E \} \right]\\
&\leq\; 2^{\left(\frac{2N}{(2K+1)L}\right)^d}\exp\left(u \left[2 \sqrt{\delta} - \frac{1}{1+\sqrt{\delta}} \right]\frac{1-\delta}{d}\left(f\left( \frac{d \lambda}{1-2\delta} \right) - \delta \right)N^{d-2} \right) \,.
\end{align*}
where we sum over all possible choices of $\mathcal{C}_2$,
We know from \eqref{eq:def-L} that $(N/L)^d=o(N^{d-2})$. This combined with Proposition~\ref{prop:es-ac} shows that
\begin{equation*}
\limsup_{N \rightarrow \infty} \frac{1}{N^{d-2}} \log \mathbbm{P}[\mathcal{B}] \leq u \left[2 \sqrt{\delta} - \frac{1}{1+\sqrt{\delta}} \right] \frac{1-\delta}{d} \left(f\left( \frac{d \lambda}{1-2\delta} \right) - \delta \right) \,.
\end{equation*}
Let $\delta$ tend to zero. By Claim (4) in Proposition~\ref{prop:property-f}, i.e., the continuity of $f$
\begin{equation*}
\limsup_{N \rightarrow \infty} \frac{1}{N^{d-2}} \log \mathbbm{P}[ \mathcal{B}] \leq -  \frac{u}{d} f(d \lambda) \,.
\end{equation*}
This finishes the proof.\end{proof}

\begin{proof}[Proof of Theorem~\ref{thm:1-2}]
Noting \eqref{eq:1-2quick},
Theorem~\ref{thm:1-2} follows from Theorem~\ref{thm:1-1} and Varadhan's lemma (see e.g.\ Theorem 4.3.1 of \cite{DZ10}).
\end{proof}

\begin{remark}
\label{remark:local-time} 1) We can also prove Claim (1) in Theorem \ref{thm:1-2} through the approach in \cite{S19b}. (Although both approaches are based on coarse-graining strategies, the definitions of good boxes are different, leading to different types of estimates.) Furthermore, we can also prove an entropic repulsion result in terms of local time. 
For $x \in \mathbbm{Z}^d$, let $L^1_x$ (resp.\ $L^2_x$) be the local time of interlacements $\mathcal{I}^{u_1}_1$ (resp.\ $\mathcal{I}^{u_2}_2$) at the vertex $x$. Then, we can show that for $u_1>u_2>0$ and all $\epsilon>0$,
\begin{equation}
\label{eq:repulsion-lt}
\lim_{N \rightarrow \infty} \mathbbm{P}\Big{[}\frac{1}{N^d} \sum_{x \in B(0,N)} L^2_x <\epsilon \Big{|}\mathcal{I}^{u_1}_1 \cap \mathcal{I}^{u_2}_2 \cap B(0,N)= \emptyset\Big{]} =1 \,.
\end{equation}
We briefly discuss the proof strategy of \eqref{eq:repulsion-lt}. In this case, we call an $L$-box good if one of the two interlacements has a small averaged local time, or they have a positive fraction of intersection points. Under this definition, most $L$-boxes are still good with super-exponential probability. Then, under the non-intersection conditioning, we know that forcing the interlacements with smaller intensity to have small averaged local time is the strategy with the highest probability. Together with some calculations similar to those in Section~\ref{sec:5}, we obtain \eqref{eq:repulsion-lt} as desired.

We can also give a global characterization of local times under this conditioning. Define the local time profile $\mathscr{L}^1_N, \mathscr{L}^2_N$ by
$$
\mathscr{L}^1_N= \frac{1}{N^d} \sum_{x \in \mathbbm{Z}^d} L^1_x \delta_{\frac{x}{N}} \,;\qquad \mathscr{L}^2_N= \frac{1}{N^d} \sum_{x \in \mathbbm{Z}^d} L^2_x \delta_{\frac{x}{N}}.
$$
For $u_1>u_2>0$ and any $R \geq 1$, we can show that
\begin{equation}
\begin{split}
&\lim_{N \rightarrow \infty} \mathbbm{E}\Big{[}d_R(\mathscr{L}^1_N \,, u_1 ) \wedge 1  \Big{|}\mathcal{I}^{u_1}_1 \cap \mathcal{I}^{u_2}_2 \cap B(0,N)= \emptyset\Big{]} =0 \,,\mbox{ and } \\
&\lim_{N \rightarrow \infty} \mathbbm{E}\Big{[}d_R(\mathscr{L}^2_N\,, u_2 \widetilde{e}_{\widetilde{B}(0,1)}(\cdot)^2) \wedge 1  \Big{|}\mathcal{I}^{u_1}_1 \cap \mathcal{I}^{u_2}_2 \cap B(0,N)= \emptyset\Big{]} =0\,,
\end{split}
\end{equation}
where $d_R$ is the $1$-Wasserstein distance confined in the box $\widetilde{B}(0,R)$. These entropic repulsion results can be obtained through the approach in \cite{CN20a}. For brevity, we will not include the proof here. One last comment is that Claim (2) in Theorem~\ref{thm:1-2} and \eqref{eq:repulsion-lt} cannot imply each other, and we choose to include the proof of the former since it naively follows from Theorem~\ref{thm:1-1} and does not require extra calculations.

\noindent 2) We now discuss the case where $u_1 = u_2$. Claim (1) in Theorem~\ref{thm:1-2} still holds in this case, but we cannot get the entropic repulsion result, i.e.\ Claim (2), naively from Theorem~\ref{thm:1-1}. However, similar to \eqref{eq:repulsion-lt}  we can show that one of the interlacements will have small local time densities in the macroscopic box and by symmetry we know that this probability is asymptotically one half, i.e., for $u_1 = u_2$ and all $\epsilon>0$,
\begin{equation}
\lim_{N \rightarrow \infty}\mathbbm{P}\Big{[}\frac{1}{N^d} \sum_{x \in B(0,N)} L^2_x <\epsilon \big{|}\mathcal{I}^{u_1}_1 \cap \mathcal{I}^{u_2}_2 \cap B(0,N)= \emptyset\Big{]} =\frac{1}{2}\,.
\end{equation}
\end{remark}

\begin{acks}[Acknowledgments]
We thank Bruno Schapira and Jiajun Tong for inspiring discussions. A large part of this work was done when the second author was an undergraduate at Peking University.
\end{acks}

\begin{funding}
The first author is supported by the National Key R\&D Program of China (No.\ 2020YFA0712900 and No.\ 2021YFA1002700) and NSFC (No.\ 12071012).
\end{funding}

\bibliography{Interlacements}
\bibliographystyle{imsart-nameyear}

\end{document}